\documentclass[a4paper,11pt]{article}
\usepackage{amssymb} 
\usepackage{amsmath} 
\usepackage{amsthm} 
\usepackage{a4wide}
\usepackage{hyperref}

\linespread{1}

\newtheorem{theorem}{Theorem}
\newtheorem{lemma}[theorem]{Lemma}
\newtheorem{claim}[theorem]{Claim}

\newcommand{\G}[2]{G_{#1,#2}}
\newcommand{\Exp}{\,\mathbb{E}}
\renewcommand{\Pr}{\,\mathbb{P}}
\newcommand{\eps}{\varepsilon}
\newcommand{\given}{ \; | \; }

\newcommand{\alphat}[1]{\alpha_{#1}}
\newcommand{\weakalphat}[1]{\hat{\alpha}_{#1}}
\newcommand{\alphatp}[2]{\alpha_{#1,#2}}
\newcommand{\weakalphatp}[2]{\hat{\alpha}_{#1,#2}}

\DeclareMathOperator{\Bin}{Bin}
\DeclareMathOperator{\avgdeg}{\overline{\deg}}

\title{Largest sparse subgraphs of random graphs}
\author{
Nikolaos Fountoulakis\\
University of Birmingham\\
{\tt n.fountoulakis@bham.ac.uk}
 \and
Ross J. Kang\\
Centrum Wiskunde \& Informatica\\
{\tt ross.kang@gmail.com}
 \and
Colin McDiarmid\\
University of Oxford\\
{\tt cmcd@stats.ox.ac.uk}}

\begin{document}

\maketitle

\begin{abstract}
  For the Erd{\H o}s-R\'enyi random graph $G_{n,p}$, we give a precise asymptotic formula for the size
  $\hat{\alpha}_{t}(G_{n,p})$ of a largest vertex subset in $G_{n,p}$ that induces a subgraph with
  average degree at most $t$, provided that $p = p(n)$ is not too small and $t = t(n)$ is not too large.
  In the case of fixed $t$ and $p$, we find that this value is asymptotically almost surely
  concentrated on at most two explicitly given points.  This generalises a result on the independence
  number of random graphs.  For both the upper and lower bounds, we rely on large deviations inequalities for
  the binomial distribution.
%
\end{abstract}

\section{Introduction}\label{sec:intro}

Given a graph $G = (V,E)$ and a non-negative number $t$, a vertex subset $S \subseteq V$ is {\em $t$-sparse} if the subgraph $G[S]$
induced by $S$ has average degree at most $t$.  The order of a largest such subset is called the {\em $t$-sparsity number} 
of $G$, denoted $\weakalphat{t}(G)$.  The $t$-sparsity number $\weakalphat{t}(G)$ is a natural generalisation of the
independence number $\alpha(G)$.  Recall that an independent set is a vertex subset of $G$ with no edges, i.e.~a $0$-sparse
set; thus the order $\alpha(G)$ of a largest independent set is just $\weakalphat{0}(G)$.  Note that $\weakalphat{t}(G)$ is non-decreasing in
terms of $t$.

We investigate the asymptotic behaviour of $\weakalphat{t}(\G{n}{p})$, where $\G{n}{p}$ is a random graph with vertex set $[n]=\{1,\dots,n\}$  and each edge is included independently at random with probability $p$.  We focus on fairly dense random graphs:~our main result holds when $p = p(n)$ satisfies $p \ge n^{-1/3+\eps}$ for some fixed $\eps > 0$ and $p$ bounded away from $1$.  
We say that a property holds {\em asymptotically almost surely (a.a.s.)} if it occurs with probability that tends to $1$ as $n \to \infty$.

For $t = 0$, that is, the independence number, the asymptotic behaviour in dense random graphs was described forty years ago by Matula~\cite{Mat70, Mat72, Mat76}, Grimmett and McDiarmid~\cite{GrMc75}, and Bollob\'as and Erd\H{o}s~\cite{BoEr76}.  For given $0 < p < 1$, define $b = 1/(1-p)$ and
\begin{align*}
\alpha_p(n) = 2\log_b n - 2 \log_b \log_b (n p) + 2 \log_b (e/2) + 1.
\end{align*}
It was shown that for any $\delta > 0$ a.a.s.~$\lfloor \alpha_p(n) - \delta \rfloor \le \alpha(\G{n}{p}) \le \lfloor \alpha_p(n) + \delta \rfloor$.  The main objective of this paper is to provide an analogue of this for $\weakalphat{t}(\G{n}{p})$.

Some previous estimates on $\weakalphat{t}(\G{n}{p})$ are implicit in the work of two of the authors.  In particular, for fixed $p$, it was observed using a first moment argument that for any $\eps > 0$, even if $t$ is a growing function of $n$, as long as $t = o(\ln (n p))$, we have $\weakalphat{t}(\G{n}{p}) \le (2 + \eps) \log_b (n p)$ a.a.s., cf.~\cite[Lemma~2.1]{KaMc07}.  It follows that $\weakalphat{t}(\G{n}{p})$ and $\alpha(\G{n}{p})$ share the same first-order term growth if $t = o(\ln (n p))$.
Furthermore, if $t = \omega(\ln (n p))$, then $(1 - \eps)t/p \le \weakalphat{t}(\G{n}{p}) \le (1 + \eps)t/p$, cf.~\cite[Lemma~2.2]{KaMc07}.
If $t = \Theta(\ln (n p))$, then the growth of the first-order term of $\weakalphat{t}(\G{n}{p})$ is a multiple of $\log_b (n p)$, and large deviation techniques were used to determine the factor (which depends on $p$ and $t$)~\cite{KaMc10}.  (With the exception of the precise factor at the threshold $t = \Theta(\ln (n p))$, these statements have been shown to remain valid for smaller values of $p$ as long as $p \gg 1/n$, cf.~\cite[Theorem~4.18]{Kan08}.)

In this work, we present a sharper description of $\weakalphat{t}(\G{n}{p})$, using a finer application of the above-mentioned methods.
However, we do not concern ourselves with the entire range of choices for the growth of $t$ as a function of $n$, as above.  To get our sharp formula with second- and third-order terms,
$p=p(n)$ must not tend to 0 too quickly, and $t=t(n)$ must not grow too quickly.
For $0 < p < 1$, define $b = 1/(1-p)$ and
\begin{align}\label{eqn:weakalphatp}
\weakalphatp{t}{p}(n) = 2\log_b n + (t - 2)\log_b \log_b (n p) - t \log_b t + t \log_b (2 b p e) + 2 \log_b (e/2) + 1.
\end{align}
Observe that $\weakalphatp{0}{p}(n) = \alpha_p(n)$ (under the convention that $0\ln0=0$) and also $\weakalphatp{t}{p}(n) = \alpha_p(n) + t\log_b((2bpe/t)\log_b(np))$. 
We prove the following.
\begin{theorem}\label{thm:dense,weak}
Let $0 < p = p(n) < 1$ be such that $p$ is bounded away from $1$ and $p > n^{-1/3+\eps}$, for some positive $\eps < 1/3$.  
Suppose $t = t(n) \ge 0$  and $\delta = \delta(n) > 0$ satisfy $t = o(\ln n / \ln \ln n)$ and $t^2 \ln \ln n/\ln n = o(p\delta)$.
Let $\weakalphatp{t}{p}(n)$ be as defined in~\eqref{eqn:weakalphatp}.
Then $\left\lfloor \weakalphatp{t}{p}(n) - \delta \right\rfloor \le \weakalphat{t}(\G{n}{p}) \le 
\left\lfloor \weakalphatp{t}{p}(n) + \delta \right\rfloor$ a.a.s.
\end{theorem}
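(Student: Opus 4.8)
The plan is to study, for a single well-chosen value of $k$, the random variable $X_k$ counting the $t$-sparse $k$-subsets of $\G{n}{p}$. Since a $k$-set $S$ is $t$-sparse exactly when $e(G[S])\le tk/2$, and $e(G[S])\sim\Bin(\binom{k}{2},p)$, we have $\Exp X_k=\binom{n}{k}\,\Pr[\Bin(\binom{k}{2},p)\le tk/2]$. The first step is a deterministic averaging reduction: if a graph $G$ has a $t$-sparse set $S$ of size $\ell>k$, then a uniformly random $k$-subset $S'\subseteq S$ satisfies $\Exp[e(G[S'])]=\frac{\binom{k}{2}}{\binom{\ell}{2}}e(G[S])\le\frac{\binom{k}{2}}{\binom{\ell}{2}}\cdot\frac{t\ell}{2}\le\frac{tk}{2}$, so some $k$-subset of $S$ is $t$-sparse. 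Applied to $\G{n}{p}$ this gives $\{\weakalphat{t}(\G{n}{p})\ge k\}=\{X_k\ge1\}$, so it suffices to prove $\Exp X_{k_+}\to0$ for $k_+=\lfloor\weakalphatp{t}{p}(n)+\delta\rfloor+1$ (upper bound) and $\Pr[X_{k_-}\ge1]\to1$ for $k_-=\lfloor\weakalphatp{t}{p}(n)-\delta\rfloor$ (lower bound); no union over $k$ is required.

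For the upper bound I would apply Markov's inequality to $X_{k_+}$. The hypotheses $t=o(\ln n/\ln\ln n)$ and $p>n^{-1/3+\eps}$ ensure that the target $tk/2$ lies well below the mean $p\binom{k}{2}$, so the binomial lower tail is dominated by its largest term: using a sharp large-deviation estimate,
\[
\Pr\left[\Bin\left(\binom{k}{2},p\right)\le tk/2\right]=(1+o(1))\,\binom{\binom{k}{2}}{m}p^{m}(1-p)^{\binom{k}{2}-m},\qquad m=\lfloor tk/2\rfloor.
\]
Taking logarithms to base $b=1/(1-p)$, expanding $\binom{n}{k}$ and $\binom{\binom{k}{2}}{m}$ by Stirling to constant order and using $\log_b(1-p)=-1$, I would write $\log_b\Exp X_k$ as an explicit function of $k$ whose unique relevant zero is $\weakalphatp{t}{p}(n)$; this is precisely where the lower-order terms $(t-2)\log_b\log_b(np)$, $-t\log_b t$, $t\log_b(2bpe)$ and the constants $2\log_b(e/2)+1$ of~\eqref{eqn:weakalphatp} are produced. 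Since $k_+$ exceeds the threshold $\weakalphatp{t}{p}(n)$ by at least $\delta$ and $\log_b\Exp X_k$ decreases in $k$ at rate $\approx\log_b n$, one gets $\log_b\Exp X_{k_+}\lesssim-\delta\log_b n\to-\infty$, hence $\Exp X_{k_+}\to0$.

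For the lower bound I would run the second-moment method on $X_{k_-}$. For ordered pairs of $k$-sets $S,T$ with $|S\cap T|=i$, split the potential edges into the $\binom{i}{2}$ edges inside $S\cap T$, common to both induced subgraphs, and the private ones; writing $E_\cap\sim\Bin(\binom{i}{2},p)$ and $A\sim\Bin(\binom{k}{2}-\binom{i}{2},p)$ independently, and $g(j)=\Pr[A\le tk/2-j]$, the joint $t$-sparseness probability is $\Exp[g(E_\cap)^2]$ while the marginal is $q=\Exp[g(E_\cap)]=\Pr[\Bin(\binom{k}{2},p)\le tk/2]$. Counting pairs by $i$ gives
\[
\frac{\Exp X_k^{2}}{(\Exp X_k)^{2}}=\sum_{i=0}^{k}\frac{\binom{k}{i}\binom{n-k}{k-i}}{\binom{n}{k}}\,R_i,\qquad R_i=\frac{\Exp[g(E_\cap)^2]}{(\Exp[g(E_\cap)])^{2}}\ge1,
\]
with $R_i$ reducing to the classical $b^{\binom{i}{2}}$ when $t=0$. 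The $i=0$ term equals $1+o(1)$, and using the same (now two-sided) binomial large-deviation bounds to control $R_i$, I would show all terms with $i\ge1$ sum to $o(1)$, so the ratio tends to $1$. As $k_-$ sits at least $\delta$ below the threshold we have $\Exp X_{k_-}\to\infty$, and Chebyshev's inequality then yields $\Pr[X_{k_-}\ge1]\to1$.

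The main obstacle is this correlation sum. For intermediate and large overlaps $i$ the common-edge factor $R_i$ can be as large as $b^{\binom{i}{2}}$, and it must be shown to be swamped by the small combinatorial weight $\binom{k}{i}\binom{n-k}{k-i}/\binom{n}{k}$, uniformly in $i$ and while $t$ grows; this is exactly where precise two-sided large-deviation inequalities for $\Bin(\cdot,p)$ are indispensable. It is also where the density assumption $p>n^{-1/3+\eps}$ and the growth restrictions $t=o(\ln n/\ln\ln n)$ and $t^2\ln\ln n/\ln n=o(p\delta)$ enter: their role is to keep the accumulated third-order error in the expansions $o(\delta\log_b n)$, so that the $\delta$-window and the floor functions in the statement resolve correctly, while maintaining constant-order accuracy throughout the Stirling estimates to reproduce the exact constant $2\log_b(e/2)+1$.
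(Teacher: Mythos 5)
Your skeleton is the same as the paper's: a first-moment/Markov argument at $k^+$ and a second-moment argument at $k^-$ decomposed over overlap sizes, both driven by two-sided large deviations for the binomial lower tail (the paper invokes Janson's inequality where you use Chebyshev, an inessential difference). Your averaging reduction --- that a $t$-sparse set of size $\ell>k$ contains a $t$-sparse $k$-subset, so $\{\weakalphat{t}(\G{n}{p})\ge k\}=\{X_k\ge 1\}$ --- is correct and worth making explicit, since $t$-sparsity is not hereditary and the paper uses this fact only tacitly when bounding $\Pr(\weakalphat{t}(\G{n}{p})\ge k^+)$ by $\Exp(|\mathcal{S}_{n,t,k^+}|)$. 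The first-moment half of your plan would go through essentially as in Lemma~\ref{lem:ExpectedAll}.

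The genuine gap is at exactly the point you label ``the main obstacle'': for overlaps $i$ close to $k$ you give no mechanism, only the assertion that the weight $\binom{k}{i}\binom{n-k}{k-i}/\binom{n}{k}$ swamps $R_i$, together with the bound $R_i\le b^{\binom{i}{2}}$. That crude bound (conditioning on $E[A\cap B]=\emptyset$) is what the paper uses for \emph{small} overlaps ($\Delta_1$), but it provably fails for large overlaps once $t\to\infty$: since $\Pr(A\in\mathcal{S}_{n,t,k})=\exp\bigl(-\binom{k}{2}\Lambda^*(t/(k-1))+O(\ln k)\bigr)$ and, by Lemma~\ref{lem:Lambda*}, $\ln b-\Lambda^*(t/(k-1))=(1+o(1))(t/k)\ln(pk/t)$, one gets for $i=k-O(1)$ that $b^{\binom{i}{2}}\Pr(A\in\mathcal{S}_{n,t,k})=\exp\bigl((1+o(1))\tfrac{tk}{2}\ln\tfrac{pk}{t}-O(k\ln b)\bigr)$, whose main term is of order $t\ln(np)\ln\ln n/p$ and thus dwarfs both the per-vertex weight $e^{-O(\ln n)}$ and $\ln\Exp X_{k^-}=(1+o(1))\delta\ln(np)$ in the two-point regime where $\delta$ is small. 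The missing idea is the paper's treatment of $\Delta_3$: condition on $m=e(A\cap B)$ and split at $\mu\approx tk/2-(k-\ell)(k+\ell-1)\psi p/2$, where the tilt $\psi$ is defined by $\Lambda^*(\psi p)=(1-\xi)\ln b$; for $m\le\mu$ the depressed lower tail of $e(A\cap B)$ yields an extra gain via the perturbation Lemma~\ref{lem:Lambda*}, while for $m>\mu$ each of the independent private counts $e'(A,B)$, $e'(B,A)$ must fall below $tk/2-\mu$, costing $\exp\bigl(-\tfrac{(k-\ell)(k+\ell-1)}{2}(1-\xi)\ln b\bigr)$ apiece. Relatedly, your account of the hypothesis $p>n^{-1/3+\eps}$ (controlling ``accumulated third-order Stirling errors'') misattributes its role: the paper needs it only in this large-overlap regime, to absorb three factors of $\ln(1/p)$ in the comparison of $(k-\ell)\ln(kn)$ against $(k-\ell)(2-2\eps)\ln(np)$; the weaker $p\ge n^{-1/2+\eps}$ suffices for $\Delta_1$ and $\Delta_2$, and the first moment needs only $np\to\infty$. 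Your exact identity $R_i=\Exp[g(E_\cap)^2]/(\Exp[g(E_\cap)])^2$ is a clean formulation that in principle contains this estimate, but as written the proposal leaves the theorem's central difficulty unproved.
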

\noindent
We see then that $\weakalphat{t}(\G{n}{p})$ is concentrated around $\weakalphatp{t}{p}(n)$ in an interval of width approximately $t^2 \ln \ln n/(p \ln n)$.  Thus, if $t^2 = o(p \ln n/\ln \ln n)$, then we have \emph{two-point concentration} (or \emph{focusing}), on explicit values.

Let us mention another related generalisation of the independence number.
Given a graph $G = (V,E)$ and a non-negative integer $t$, a vertex subset $S \subseteq V$ is {\em $t$-dependent} (or {\em $t$-stable}) if the subgraph $G[S]$ induced by $S$ has maximum degree at most $t$.  The order of a largest such subset is called the {\em $t$-dependence} (or {\em $t$-stability}) number of $G$, denoted $\alphat{t}(G)$.  Easily, $\alphat{t}(G) \le \weakalphat{t}(G)$.
 In~\cite{FKM10}, we considered $\alphat{t}(\G{n}{p})$, with our attention restricted to fixed $p$ and fixed $t$,
  in order to apply analytic techniques to the generating function of degree sequences on $k$ vertices
  and maximum degree at most $t$. For $0 < p < 1$, define
\begin{align*}
\alphatp{t}{p}(n) = 2\log_b n + (t - 2)\log_b \log_b (n p) + \log_b (t^t/t!^2) + t \log_b (2 b p/e) + 2 \log_b (e/2) + 1.
\end{align*}
 We showed in~\cite{FKM10} that for any fixed $\delta > 0$, $\left\lfloor \alphatp{t}{p}(n) - \delta \right\rfloor \le \alphat{t}(\G{n}{p}) \le \left\lfloor \alphatp{t}{p}(n) + \delta \right\rfloor$ a.a.s.
Note that in this setting the difference between the $t$-sparsity and the $t$-dependence numbers of $\G{n}{p}$ is essentially $\weakalphatp{t}{p}(n) - \alphatp{t}{p}(n) = 2 \log_b (t! e^t / t^t)$.  By Stirling's approximation for $t!$ (cf.~\cite{Bol01}), we have that $\weakalphatp{t}{p}(n) - \alphatp{t}{p}(n) \sim \log_b (2 \pi t)$ as $t \to \infty$.

We also comment here that, even if $t$ is fixed, the property of $t$-sparsity is not hereditary, i.e.~$t$-sparsity is not closed under vertex-deletion.  Hence the general asymptotic results of Bollob\'as and Thomason~\cite{BoTh00} (developed in a long line of research that can be traced back to early results of Alekseev~\cite{Ale82}, cf.~also~\cite{Ale92}), for partitions of random graphs according to a fixed hereditary property, are not applicable here.
In our previous studies~\cite{FKM10,KaMc10}, it was useful that $t$-dependence is hereditary for fixed $t$.  Unfortunately, this is not the case for $t$-sparsity.

As will become apparent, challenges arise in the second moment computations.  We have split this into several parts, according to the degree of overlap between two $k$-subsets of $[n]$.  Furthermore, in each part we must carefully account for the number of edges which are, say, within one of the $k$-subsets but not the other, or strictly contained in the overlap, and so on.  This careful accounting makes use of large deviations bounds for the binomial distribution.

The term ``sparse'' may take on a number of different meanings in graph theoretic or algorithmic research.  Instead of bounding average degree, one could instead bound for example the degeneracy (i.e.~the maximum over all subgraphs of the minimum degree) or the maximum average degree.   The counterparts of $t$-sparsity for these alternative versions of ``sparse'' are certainly of interest, but we do not pursue them here.  We remark only that the counterpart for the former example is bounded below by $\alphat{t}$, while for the latter example it is necessarily bounded between $\alphat{t}$ and $\weakalphat{t}$.

It is worth noting that the algorithmic complexity of computing the $t$-sparsity of a graph --- for the special cases of $t$ fixed or $t$ parameterised in terms of the order of the target set --- was recently studied by Bourgeois et al.~\cite{BGLMPP12} and, perhaps unsurprisingly, NP-hardness was shown to hold even in the restricted case of bipartite graphs.

Our paper is organised as follows.
In Section~\ref{sec:largedeviations}, we outline the large deviations results that we employ.
In Section~\ref{sec:Expectation}, we perform first moment calculations to obtain Lemma~\ref{lem:ExpectedAll}; this lemma implies the upper bound in Theorem~\ref{thm:dense,weak}.
In Section~\ref{sec:secondmoment}, we give a second moment calculation (Lemma~\ref{lem:dense,weak,lower}) which implies the lower bound in Theorem~\ref{thm:dense,weak}.


\section{Large deviations}\label{sec:largedeviations}

In this section, we state the large deviations techniques used to precisely describe the average degree of a $k$-set (a vertex subset of order $k$) in $\G{n}{p}$.
For background into large deviations, consult Dembo and Zeitouni~\cite{DeZe98}; we borrow some notation from this reference.  Given $0 < p < 1$, we let $q = 1 - p$ throughout.  Also, let
\[
\Lambda^*(x) = \left\{ \begin{array}{ll}
\displaystyle x \ln \frac{x}{p} + (1 - x) \ln \frac{1 - x}{q} & \mbox{for $x\in [0, 1]$}\\
\infty & \mbox{otherwise}
\end{array} \right.
\]
(where $\Lambda^*(0) = \ln (1/q)$ and $\Lambda^*(1) = \ln (1/p)$).
This is the Fenchel-Legendre transform of the logarithmic moment generating function associated with the Bernoulli distribution with probability $p$ (cf.~Exercise 2.2.23(b) of~\cite{DeZe98}).  Some easy calculus verifies that $\Lambda^*(x)$ has a global minimum of $0$ at $x = p$, is strictly decreasing on $[0, p)$ and strictly increasing on $(p,1]$.  

In the next lemma --- a large deviations result for the binomial distribution --- the upper bound follows easily from a strong version of Chernoff's bound, e.g.~(2.4) in~\cite{JLR00}, while the lower bound is implied by a sharp form of Stirling's formula, e.g.~(1.4) of~\cite{Bol01}: see the appendix of~\cite{KaMc10} for an explicit proof (when $r$ is integral).

\begin{lemma} \label{lem.bindev}
There is a constant $\delta>0$ such that the following holds.  Let $0<p<1$, let $N$ be a positive integer,
and let $X \in \Bin(N,p)$.  Then, for each  $ 1 \le r \le N-1$ such that $r\leq Np$,
\[
\delta \cdot\max\left\{r^{-1/2},(N-r)^{-1/2}\right\} \cdot \exp(-N\Lambda^*( r/N )) \le \Pr( X \le r) \le \exp(-N\Lambda^*( r/N )).
\]
\end{lemma}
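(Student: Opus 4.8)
The plan is to prove the two inequalities separately: the upper bound is the routine Chernoff direction, while the lower bound is where the sharp form of Stirling's formula is needed. Throughout I treat $r$ as an integer, which is the case of interest and the one covered by the explicit proof in the appendix of~\cite{KaMc10}. For the upper bound I would write $X = \sum_{i=1}^N X_i$ as a sum of independent $\mathrm{Bernoulli}(p)$ variables and apply the exponential Markov inequality: for any $\lambda > 0$, $\Pr(X \le r) = \Pr(e^{-\lambda X} \ge e^{-\lambda r}) \le e^{\lambda r}(q + pe^{-\lambda})^N$. Minimising the exponent $\lambda r + N\ln(q + pe^{-\lambda})$ over $\lambda \ge 0$, the stationarity condition reads $pe^{-\lambda}/(q + pe^{-\lambda}) = r/N$, which admits a solution with $\lambda \ge 0$ exactly when $r \le Np$ (our hypothesis). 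Substituting the optimal $\lambda$ and simplifying, the exponent collapses to precisely $-N\Lambda^*(r/N)$, giving $\Pr(X \le r) \le \exp(-N\Lambda^*(r/N))$; this is also immediate from the strong Chernoff bound (2.4) of~\cite{JLR00}.

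For the lower bound the key observation is simply $\Pr(X \le r) \ge \Pr(X = r) = \binom{N}{r} p^r q^{N-r}$, which reduces everything to estimating a single binomial term. Applying the two-sided Stirling bound $n! = \sqrt{2\pi n}\,(n/e)^n e^{\theta_n}$ with $1/(12n+1) < \theta_n < 1/(12n)$ (e.g.~(1.4) of~\cite{Bol01}) to each of $N!$, $r!$ and $(N-r)!$ yields $\binom{N}{r} = e^{\theta_N - \theta_r - \theta_{N-r}} \sqrt{N/(2\pi r(N-r))}\; N^N/(r^r (N-r)^{N-r})$. Multiplying by $p^r q^{N-r}$, the purely exponential factor becomes $(Np/r)^r(Nq/(N-r))^{N-r}$, whose logarithm is $-N\Lambda^*(r/N)$ by the same algebraic identity used in the upper bound (with $x = r/N$). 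What remains is to control the two prefactors.

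The prefactors are handled by two elementary facts. First, $N/(r(N-r)) = 1/r + 1/(N-r) \ge \max\{1/r,\,1/(N-r)\}$, so that $\sqrt{N/(2\pi r(N-r))} \ge (2\pi)^{-1/2}\max\{r^{-1/2},(N-r)^{-1/2}\}$; this is precisely what produces the $\max\{r^{-1/2},(N-r)^{-1/2}\}$ factor in the statement. Second, since $\theta_N > 0$ while $\theta_r, \theta_{N-r} < 1/12$ and $r \ge 1$, $N-r \ge 1$, the Stirling error factor obeys $e^{\theta_N - \theta_r - \theta_{N-r}} > e^{-1/6}$, a fixed constant. Collecting these gives $\Pr(X \le r) \ge (e^{-1/6}/\sqrt{2\pi})\max\{r^{-1/2},(N-r)^{-1/2}\}\exp(-N\Lambda^*(r/N))$, so any $\delta \le e^{-1/6}/\sqrt{2\pi}$ suffices.

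The main obstacle — modest, in this otherwise standard estimate — is twofold. One must verify that the algebraic simplification of the exponent reproduces $\Lambda^*$ \emph{exactly}, so that the upper and lower bounds are anchored at the same rate function and the two-sided conclusion is genuine. And one must ensure the constant $\delta$ is truly absolute, i.e.~uniform over all admissible $N$ and $r$, including the extreme cases $r = 1$ or $r = N-1$ where the Stirling corrections $\theta_r, \theta_{N-r}$ are largest; it is exactly the hypotheses $r \ge 1$ and $N-r \ge 1$ that keep these corrections uniformly bounded. If one wished to admit non-integral $r$, one would instead bound $\Pr(X \le r) \ge \Pr(X = \lfloor r \rfloor)$ and estimate the change in the rate function across a single step, but the cleanest statement — and the cited proof — restricts to integral $r$.
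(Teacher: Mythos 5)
Your proof is correct and follows exactly the route the paper indicates for this lemma (which it does not prove in-house but attributes to the strong Chernoff bound (2.4) of~\cite{JLR00} for the upper inequality and the sharp Stirling formula (1.4) of~\cite{Bol01}, applied to the single term $\Pr(X=r)=\binom{N}{r}p^rq^{N-r}$, for the lower inequality, with the explicit argument for integral $r$ in the appendix of~\cite{KaMc10}). Your verification that the optimised Chernoff exponent equals $-N\Lambda^*(r/N)$ exactly, your prefactor bound via $N/(r(N-r))=1/r+1/(N-r)\ge\max\{1/r,1/(N-r)\}$, and your uniform control $e^{\theta_N-\theta_r-\theta_{N-r}}>e^{-1/6}$ yielding an absolute $\delta\le e^{-1/6}/\sqrt{2\pi}$ are all sound, and your closing remark on integral $r$ matches the paper's own caveat.
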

Lemma~\ref{lem.bindev} immediately yields the following estimate on the probability that a given set of size $k$ is $t$-dependent. 
For a graph $G$, we let $\avgdeg(G)$ denote the average degree of $G$.
\begin{lemma}\label{lem:A_n}
Suppose $0 < p = p(n) < 1$ and suppose that $t = t(n) \geq 1$ and the positive integer $k = k(n)$ satisfy that 
$t \le p(k - 1)$. Then
\begin{enumerate}
\item\label{lem:A_n,i} $\displaystyle \Pr(\avgdeg(\G{k}{p}) \le t) \le \exp\left(-\binom{k}{2} \Lambda^*\left( \frac{t}{k - 1} \right) \right)$; and
\item\label{lem:A_n,ii} $\displaystyle \Pr(\avgdeg(\G{k}{p}) \le t) \ge \exp\left(-\binom{k}{2} \Lambda^*\left( \frac{t}{k - 1} \right) - \frac12 \ln k +O(1) \right)$.
\end{enumerate}
\end{lemma}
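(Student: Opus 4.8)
The plan is to reduce the event $\{\avgdeg(\G{k}{p}) \le t\}$ to a deviation statement for a single binomial variable and then invoke Lemma~\ref{lem.bindev}. Writing $e(G)$ for the number of edges of a graph $G$, the average degree of $\G{k}{p}$ equals $2e(\G{k}{p})/k$, so the event $\{\avgdeg(\G{k}{p}) \le t\}$ is exactly $\{e(\G{k}{p}) \le tk/2\}$. Since each of the $\binom{k}{2}$ potential edges is present independently with probability $p$, we have $e(\G{k}{p}) \sim \Bin(N,p)$ with $N = \binom{k}{2}$. Setting $r = \lfloor tk/2 \rfloor$, the event becomes $\{X \le r\}$ for $X \sim \Bin(N,p)$, which is precisely the situation handled by Lemma~\ref{lem.bindev}.

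The key algebraic observation is that the threshold fraction simplifies, namely $(tk/2)/\binom{k}{2} = t/(k-1)$, so that evaluating the rate function at this point yields $N\,\Lambda^*\!\bigl(t/(k-1)\bigr) = \binom{k}{2}\Lambda^*\!\bigl(t/(k-1)\bigr)$, which is the main exponential term in both claimed bounds. Before applying Lemma~\ref{lem.bindev} I must check its hypotheses: that $1 \le r \le N-1$, which is routine for $k\ge 2$, and crucially that $r \le Np$. The latter reads $tk/2 \le \binom{k}{2}p$, i.e.\ $t \le p(k-1)$, which is exactly the standing assumption; this is what licenses applying the estimate at $t/(k-1)$, a point lying in the strictly decreasing regime $[0,p)$ of $\Lambda^*$.

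For the upper bound I would apply the upper half of Lemma~\ref{lem.bindev} directly. Since $r = \lfloor tk/2 \rfloor \le tk/2$ and $\Lambda^*$ is decreasing on $[0,p)$, rounding down only \emph{increases} $\Lambda^*(r/N)$, hence only strengthens the bound, and we obtain $\Pr(\avgdeg(\G{k}{p})\le t) \le \exp\!\bigl(-\binom{k}{2}\Lambda^*(t/(k-1))\bigr)$ cleanly, with no error term. For the lower bound I would apply the lower half of Lemma~\ref{lem.bindev}; there the polynomial prefactor, bounded below by $\max\{r^{-1/2},(N-r)^{-1/2}\} \ge r^{-1/2} = (tk/2)^{-1/2}$, is the source of the $-\tfrac12\ln k$ correction.

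The main obstacle lies entirely in the lower bound. Rounding $tk/2$ down to the integer $r$ moves the argument of $\Lambda^*$ away from $t/(k-1)$ in the \emph{unfavourable} direction, so unlike in the upper bound the perturbation cannot simply be discarded; I must control $\binom{k}{2}\bigl[\Lambda^*(r/N) - \Lambda^*(t/(k-1))\bigr]$ and absorb it into the error term alongside the prefactor. The two arguments differ by only $O(1/k^2)$, but $\Lambda^*$ is steep near the small value $t/(k-1)$ (its derivative $\ln\tfrac{x(1-p)}{p(1-x)}$ is logarithmically large in magnitude there), so verifying that this interplay between the rounding error and the prefactor yields precisely the claimed correction is the delicate part of the argument, and I expect it to be the step demanding the most care.
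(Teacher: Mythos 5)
Your proposal coincides with the paper's (implicit) argument: the paper derives this lemma ``immediately'' from Lemma~\ref{lem.bindev} via exactly your reduction $\avgdeg(\G{k}{p}) \le t \Leftrightarrow e(\G{k}{p}) \le tk/2$ with $e(\G{k}{p}) \in \Bin\left(\binom{k}{2},p\right)$, the same identity $(tk/2)/\binom{k}{2} = t/(k-1)$, and the same hypothesis check $r \le Np \Leftrightarrow t \le p(k-1)$, with the upper bound clean by monotonicity of $\Lambda^*$ on $[0,p)$. The rounding subtlety you flag in the lower bound is genuine but harmless in context: the loss from replacing $tk/2$ by $\lfloor tk/2\rfloor$ is at most $\sup|\left(\Lambda^*\right)'| = O(\ln k)$ in the exponent, and together with the prefactor $r^{-1/2} \ge (tk/2)^{-1/2}$ this gives the stated form with $-\frac12 \ln k + O(1)$ when $r$ is integral and $t$ is bounded (the case for which the paper cites an explicit proof in the appendix of~\cite{KaMc10}), while every downstream application in the paper uses the estimate only with $O(\ln k)$ slack, so your ``delicate step'' needs no sharper resolution than the crude derivative bound you describe.
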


For the second moment estimation, we will make use of the following asymptotic calculations, the proofs of which are postponed to the appendix.

\begin{lemma}\label{lem:Lambda*}
Suppose $0 < p = p(n) < 1$ and suppose the non-negative number $t = t(n)$ and positive integer $k = k(n)$ satisfy that $t = o(p(k-1))$.  For any $\eps = \eps(n)$ with $|\eps| \le 1$,
\begin{align*}
\Lambda^*\left( \frac{(1+\eps)t}{k-1} \right)
= \Lambda^*\left(\frac{t}{k-1}\right)
 - (1 + o(1))\frac{\eps t}{k} \ln \frac{p k}{t}.
\end{align*}
\end{lemma}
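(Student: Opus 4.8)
The plan is to set $x := t/(k-1)$ and expand $\Lambda^*((1+\eps)x) - \Lambda^*(x)$ directly from the definition of $\Lambda^*$. The hypothesis $t = o(p(k-1))$ says precisely that $x = o(p)$; in particular $x \to 0$ and $\ln(p/x) \to \infty$, and we also use that $k \to \infty$, as holds in every application of the lemma. I would split $\Lambda^*(u) = g(u) + h(u)$, where $g(u) = u\ln(u/p)$ and $h(u) = (1-u)\ln((1-u)/q)$, and estimate the increment of each piece separately over the two arguments $x$ and $(1+\eps)x$, both of which lie in $[0,2x]$.

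The first piece is computed exactly: from $\ln((1+\eps)x/p) = \ln(x/p) + \ln(1+\eps)$ one gets
\[
g((1+\eps)x) - g(x) = \eps x\ln(x/p) + (1+\eps)x\ln(1+\eps).
\]
Here $\eps x\ln(x/p)$ is the sought main term, and the remainder is an error: since $u\mapsto u\ln u$ is bounded on $[0,2]$ and vanishes to first order at $u = 1$, one checks that $(1+\eps)\ln(1+\eps) = O(|\eps|)$ uniformly for $\eps \in [-1,1]$ (the limits of $(1+\eps)\ln(1+\eps)/\eps$ at $\eps = 0$ and $\eps = -1$ being finite), so this contribution is $O(|\eps| x)$. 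For the second piece, since $2x \to 0$ the function $h$ is smooth on $[0,2x]$, with $h'(u) = -\ln(1-u) + \ln q - 1$ and hence $|h'(u)| = O(1 + |\ln q|)$ there; the mean value theorem then gives $h((1+\eps)x) - h(x) = O(|\eps| x(1+|\ln q|))$. I would deliberately avoid applying the mean value theorem to $\Lambda^*$ itself, whose derivative has a logarithmic singularity at $0$ that is not controlled uniformly as $\eps \to -1$; isolating the exactly computable $g$ is exactly what sidesteps this, and is the one genuinely delicate point.

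Combining the two pieces yields
\[
\Lambda^*((1+\eps)x) - \Lambda^*(x) = \eps x\ln(x/p) + O\bigl(|\eps| x(1+|\ln q|)\bigr),
\]
uniformly in $\eps \in [-1,1]$. Because $p$ is bounded away from $1$ in our applications, $|\ln q| = O(1)$, while $|\ln(x/p)| = \ln(p/x) \to \infty$; hence the error is a $(1+o(1))$-perturbation of $\eps x\ln(x/p)$ (the case $\eps = 0$ being trivial, both sides vanishing), and the resulting $o(1)$ is uniform in $\eps$ since the error bound does not depend on the value of $\eps$. Finally I would recast the main term into the stated form. Writing $\eps x\ln(x/p) = -\eps x\ln(p/x)$ and using $k\to\infty$, we have $x = t/(k-1) = (1+o(1))\,t/k$, while $\ln(p/x) = \ln(p(k-1)/t) = (1+o(1))\ln(pk/t)$ because $\ln(k/(k-1)) = O(1/k)$ is negligible against $\ln(pk/t) \to \infty$. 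Therefore $\eps x\ln(x/p) = -(1+o(1))\tfrac{\eps t}{k}\ln\tfrac{pk}{t}$, which is the claimed formula.
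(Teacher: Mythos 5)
Your proof is correct, and it organises the computation differently from the paper. The paper's proof splits into two cases: for $\eps=-1$ it computes $\Lambda^*(t/(k-1))-\Lambda^*(0)$ directly, and for $-1<\eps\le 1$ it algebraically regroups $\Lambda^*\bigl(\tfrac{(1+\eps)t}{k-1}\bigr)$ so that $\Lambda^*\bigl(\tfrac{t}{k-1}\bigr)$ appears exactly, then Taylor-estimates the three leftover terms, finding corrections of size $O(\eps^2 t/k)$ plus the main term. Your decomposition $\Lambda^*=g+h$ with $g(u)=u\ln(u/p)$ achieves the same end more uniformly: the exactly computed increment of $g$ carries both the main term and the only $\eps$-singular behaviour, via $(1+\eps)\ln(1+\eps)=O(|\eps|)$ on all of $[-1,1]$, so no separate case at $\eps=-1$ is needed, and the mean value theorem safely handles the smooth piece $h$; your remark about why one must not apply the mean value theorem to $\Lambda^*$ itself (its derivative $\ln\tfrac{uq}{p(1-u)}$ blows up at $0$, uncontrolled as $\eps\to-1$) identifies exactly why the paper needs its case split. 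What the paper's version buys is a slightly sharper $O(\eps^2 t/k)$ error for the nonsingular corrections, which is irrelevant here since only a $(1+o(1))$ factor is claimed. One point deserves emphasis: both arguments in fact need $\ln(1/q)=o\bigl(\ln\tfrac{pk}{t}\bigr)$. In your write-up this surfaces honestly as the $O\bigl(|\eps|x(1+|\ln q|)\bigr)$ error from $h$, which you discharge by invoking that $p$ is bounded away from $1$ in the applications; the paper's proof hides the same issue by absorbing the term $\tfrac{\eps t}{k-1}\ln q$ (visible in its final display, inside $\ln\tfrac{q(1+\eps)t}{p(k-1-(1+\eps)t)}$) into a $(1+o(1))$ without comment, even though the lemma is stated for all $0<p<1$. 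Similarly, your explicit flagging of $k\to\infty$ (needed to pass from $t/(k-1)$ to $t/k$ and from $\ln\tfrac{p(k-1)}{t}$ to $\ln\tfrac{pk}{t}$; for bounded $k$ the stated formula would be off by a constant factor) corresponds to an implicit step in the paper, valid in all its uses where $k=\Theta(\ln n/p)$. So your proof is sound, marginally more general in its uniform treatment of $\eps$, and more candid about the hypotheses actually used.
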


\begin{lemma} \label{lem:LambdaApprox} 
Suppose $0 < p = p(n) < 1$ and that $x = x(n) =o(p)$. Then 
$$\Lambda^* (x) = \ln b \left( 1 + o(1) \right).$$
\end{lemma}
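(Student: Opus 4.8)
The plan is to compute directly from the defining formula
\[
\Lambda^*(x) = x \ln \frac{x}{p} + (1-x)\ln\frac{1-x}{q},
\]
isolate the main term $\ln b$, and bound the remainder relative to it. Since $b = 1/q$ we have $\ln\frac{1-x}{q} = \ln(1-x) + \ln b$, so I would first rewrite
\[
\Lambda^*(x) = \ln b + x\ln\frac{x}{p} + (1-x)\ln(1-x) - x\ln b.
\]
It then remains to show that the three error terms $E_1 := x\ln(x/p)$, $E_2 := (1-x)\ln(1-x)$ and $E_3 := -x\ln b$ are each $o(\ln b)$. Note that $x = o(p)$ and $p < 1$ force $x \to 0$ and $x/p \to 0$, which I will use throughout.

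The one genuine obstacle is that $\ln b$ may itself tend to $0$: when $p \to 0$ one has $\ln b = -\ln(1-p) \sim p$, so it does not suffice to show the error terms vanish absolutely; I must show they are small \emph{relative} to $\ln b$. The device that handles both the dense regime (where $\ln b$ is bounded away from $0$) and the sparse regime uniformly is the elementary bound $\ln b = -\ln(1-p) \ge p$, valid for every $p \in (0,1)$. In particular this yields $x/\ln b \le x/p = o(1)$, so that $x = o(\ln b)$.

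Granting this, the three estimates are routine. For $E_1$, setting $u := x/p = o(1)$ gives $E_1 = p\,u\ln u$, whence $|E_1|/\ln b \le |u\ln u| \to 0$, since $p/\ln b \le 1$ and $u\ln u \to 0$. For $E_2$, the expansion $(1-x)\ln(1-x) = -x + O(x^2)$ gives $|E_2| = O(x) = o(\ln b)$. For $E_3$, simply $|E_3|/\ln b = x \to 0$. Summing, $\Lambda^*(x) = \ln b + o(\ln b) = \ln b\,(1 + o(1))$, as claimed.
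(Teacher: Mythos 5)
Your proof is correct and takes essentially the same route as the paper's: the identical decomposition $\Lambda^*(x) = \ln b + x\ln(x/p) + (1-x)\ln(1-x) - x\ln b$, with each remainder term bounded relative to $\ln b$. The only (minor) difference is that the paper invokes $p = \Theta(\ln b)$, which relies on its standing assumption that $p$ is bounded away from $1$, whereas you use only the one-sided inequality $\ln b \ge p$, so your write-up remains valid uniformly even as $p \to 1$ --- a small refinement rather than a different argument.
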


\noindent
We remark that we will throughout make implicit use of the fact that,  for $0 < x < 1$, $-x/(1-x) < \ln (1-x) < -x$.


\section{An expectation calculation for the upper bound}\label{sec:Expectation}
In this section, we consider the expected number of $t$-sparse $k$-sets.  Note that the range of valid values for $p$ in the following lemma is not as restrictive as for Theorem~\ref{thm:dense,weak}, and that the conditions for $t$ and $\delta$ are accordingly more general.

\begin{lemma} \label{lem:ExpectedAll}
Let $0 < p = p(n) < 1$ be such that $n p \to \infty$ as $n\to \infty$ and $p$ is bounded away from $1$. 
Suppose $t = t(n) \ge 0$ and $\delta = \delta(n) > 0$ satisfy $t = o(\ln (n p) / \ln \ln (n p))$ and $t^2 \log_b \ln (n p)/\ln (n p) = o(\delta)$.
Let $\weakalphatp{t}{p}(n)$ be as defined in~\eqref{eqn:weakalphatp}.
Let $k^+ = \lceil \weakalphatp{t}{p}(n) + \delta \rceil$ and $k^- = \lfloor \weakalphatp{t}{p}(n) - \delta \rfloor$ and let $\mathcal{S}_{n,t,k^+}$ and $\mathcal{S}_{n,t,k^-}$ be the collections of $t$-sparse $k^+$-sets and $k^-$-sets, respectively.
Then
\begin{align*}
  \Exp(|\mathcal{S}_{n,t,k^-}|)
 & \ge \exp\left((1+o(1))\delta\ln (n p)\right)\text{ and }\\
  \Exp(|\mathcal{S}_{n,t,k^+}|)
 & \le \exp\left(-(1+o(1))\delta\ln (n p)\right).
\end{align*}
\end{lemma}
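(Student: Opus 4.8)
The plan is to compute the logarithm of $\Exp(|\mathcal{S}_{n,t,k}|)$ as a function of $k$ and then substitute $k=k^{+}$ and $k=k^{-}$. By linearity of expectation and the vertex-symmetry of $\G{n}{p}$, each $k$-set is $t$-sparse with probability $\Pr(\avgdeg(\G{k}{p})\le t)$, so
\[
\Exp(|\mathcal{S}_{n,t,k}|)=\binom{n}{k}\,\Pr(\avgdeg(\G{k}{p})\le t).
\]
First I would bound the probability factor with Lemma~\ref{lem:A_n}, whose hypothesis $t\le p(k-1)$ holds here because $k\sim 2\log_b n$ and $t=o(\ln(np))$. Writing
\[
L(k):=\ln\binom{n}{k}-\binom{k}{2}\Lambda^*\!\left(\frac{t}{k-1}\right),
\]
part~\ref{lem:A_n,i} gives $\ln\Exp(|\mathcal{S}_{n,t,k^{+}}|)\le L(k^{+})$, while part~\ref{lem:A_n,ii} gives $\ln\Exp(|\mathcal{S}_{n,t,k^{-}}|)\ge L(k^{-})-\tfrac12\ln k^{-}+O(1)$. (For $t=0$ one instead uses $\Pr(\avgdeg(\G{k}{p})\le 0)=q^{\binom{k}{2}}$ directly; since $\Lambda^*(0)=\ln b$ this is covered by the same formula, with no $\tfrac12\ln k$ loss.) It thus suffices to show $L(k^{-})\ge(1+o(1))\delta\ln(np)$ and $L(k^{+})\le-(1+o(1))\delta\ln(np)$, the stated hypotheses guaranteeing that the $\tfrac12\ln k^{-}$ and $O(1)$ terms are $o(\delta\ln(np))$.

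Next I would expand $L(k)$ for $k$ near $\weakalphatp{t}{p}(n)$. Stirling's formula gives $\ln\binom{n}{k}=k\ln n-k\ln k+k-\tfrac12\ln(2\pi k)+O(k^{2}/n)$, the error being negligible in the range $k=O(\log_b n)$. For the second term I would expand $\Lambda^*$ about $0$ from its definition, using the inequality $-x/(1-x)<\ln(1-x)<-x$ together with Lemmas~\ref{lem:Lambda*} and~\ref{lem:LambdaApprox}, to get $\Lambda^*(x)=\ln b+x\ln\frac{x}{pbe}+O(x^{2})$ for $x=o(p)$, hence
\[
\binom{k}{2}\Lambda^*\!\left(\frac{t}{k-1}\right)=\binom{k}{2}\ln b+\frac{kt}{2}\ln\frac{t}{(k-1)pbe}+O(t^{2}).
\]
The crux is then the substitution: I would verify that the six summands of the definition~\eqref{eqn:weakalphatp} of $\weakalphatp{t}{p}(n)$ — namely $2\log_b n$, $(t-2)\log_b\log_b(np)$, $-t\log_b t$, $t\log_b(2bpe)$, $2\log_b(e/2)$ and $1$ — are exactly those needed to cancel the terms produced by the two expansions, leaving $L(\weakalphatp{t}{p}(n))=o(\delta\ln(np))$, and that the consecutive decrease $L(k)-L(k+1)=(1+o(1))\ln(np)$ uniformly across the window $[k^{-},k^{+}]$.

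The essential feature to confirm is that this decay rate is $\ln(np)$, and not merely $\ln n$. Since $\ln\binom{n}{k+1}-\ln\binom{n}{k}=\ln\frac{n-k}{k+1}$ and $k^{*}:=\weakalphatp{t}{p}(n)$ satisfies $\ln k^{*}\sim\ln\log_b n=\ln\ln n-\ln\ln b$, one finds $\ln(n/k^{*})\sim\ln n+\ln p=\ln(np)$, and it is $\ln(n/k^{*})$ that governs the slope. In particular, when $p\to 0$ the correction terms of~\eqref{eqn:weakalphatp} carrying the $\log_b\log_b(np)$ factors are comparable to, not negligible beside, the leading $2\log_b n$, so $\weakalphatp{t}{p}(n)$ must be handled through its defining balance rather than as a leading term plus lower-order corrections. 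Telescoping the uniform difference estimate from $k^{*}$ and using $k^{-}\le\weakalphatp{t}{p}(n)-\delta$, $k^{+}\ge\weakalphatp{t}{p}(n)+\delta$ then yields the two bounds, the hypotheses entering precisely here: $t=o(\ln(np)/\ln\ln(np))$ forces the $t$-dependent slope corrections of order $t\ln\ln(np)$ to be $o(\ln(np))$, keeping the rate at $(1+o(1))\ln(np)$, while $t^{2}\log_b\ln(np)/\ln(np)=o(\delta)$ forces the residual $O(t^{2})$ error, together with the $\tfrac12\ln k^{-}$ loss and the rounding gaps between $k^{\pm}$ and $\weakalphatp{t}{p}(n)$, to be $o(\delta\ln(np))$.

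I expect the main obstacle to be the bookkeeping in this substitution: matching the six explicit terms of $\weakalphatp{t}{p}(n)$ one-for-one against the terms produced by Stirling's expansion and the second-order expansion of $\binom{k}{2}\Lambda^*(t/(k-1))$, while tracking with care which quantities are $\ln n$, which are $\ln(np)$, and which are $\ln\ln(np)$ — a distinction that is invisible for fixed $p$ but decisive as $p\to 0$, and which is exactly what determines whether the decay rate comes out as the required $\ln(np)$.
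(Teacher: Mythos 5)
Your proposal is correct and takes essentially the same route as the paper's proof: linearity of expectation plus Lemma~\ref{lem:A_n}, Stirling for $\binom{n}{k}$, and a second-order expansion of $\Lambda^*$ near $0$ (your closed form $\Lambda^*(x)=\ln b+x\ln\tfrac{x}{pbe}+O(x^2)$ is exactly the paper's term-by-term expansion of $(k-1)\Lambda^*\left(\tfrac{t}{k-1}\right)$ via $\ln\left(1-\tfrac{t}{k-1}\right)$, repackaged), followed by the verification that the six terms of~\eqref{eqn:weakalphatp} are calibrated so that only $\pm\delta\ln b$ per vertex survives, with the two hypotheses absorbing precisely the $O(t^2)$, $O(\ln k)$ and $t\ln\ln(np)$-type errors, just as in the paper. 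Your center-plus-uniform-slope telescoping is merely a reorganisation of the paper's direct substitution of the bounds on $(k^{\pm}-t-1)\ln b$ into the normalised quantity $2\ln\Exp(|\mathcal{S}_{n,t,k}|)/k$, which is then multiplied by $k^{\pm}/2=(1+o(1))\log_b(np)$ to convert the per-vertex rate $\delta\ln b$ into $\delta\ln(np)$; your explicit handling of $t=0$ is if anything slightly more careful than the paper's, and your glossing of the $O(k^2/n)$ Stirling error is no worse than the paper's own step $\binom{n}{k}=(en/k)^k\exp(O(\ln k))$.
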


\begin{proof}
Note that $\ln b = (1 + o(1))p$ if $p \to 0$ as $n\to \infty$.  For almost the entire proof, the calculations are carried out in terms of $k$, instead of $k^+$ or $k^-$.

By Lemma~\ref{lem:A_n},
\begin{align*}
\Exp(|\mathcal{S}_{n,t,k}|)
& = \binom{n}{k}\exp\left(-\binom{k}{2} \Lambda^*\left( \frac{t}{k - 1} \right) + O(\ln k)\right) \\
& = \left( \frac{en}{k} \right)^k \exp\left(-\left( \frac{k-1}{2} \right) \Lambda^*\left( \frac{t}{k - 1} \right) + O\left(\frac{\ln k}{k}\right)\right)^k\\
& = \exp\left(1 + \ln n - \ln k -\left( \frac{k-1}{2} \right) \Lambda^*\left( \frac{t}{k - 1} \right) + O\left(\frac{\ln k}{k}\right) \right)^k;
\end{align*}
therefore,
\begin{align}
\frac{2 \ln \Exp(|\mathcal{S}_{n,t,k}|)}{k}
& = 2 + 2 \ln n - 2 \ln k -(k-1) \Lambda^*\left( \frac{t}{k - 1} \right) + O\left(\frac{\ln k}{k}\right). \label{eqn:logAn/k,d}
\end{align}
Let us now expand one of the terms in~\eqref{eqn:logAn/k,d} using the formula for $\Lambda^*$:
\begin{align*}
(k-1) &\Lambda^*\left( \frac{t}{k - 1} \right)
 = t\ln \frac{t}{p(k - 1)} + (k - t - 1)\ln \left( \left( 1 - \frac{t}{k - 1} \right) \cdot \frac{1}{q} \right) \nonumber\\
& = t\ln t - t \ln (p(k - 1)) + (k - t - 1)\ln \left( 1 - \frac{t}{k - 1} \right) + (k - t - 1)\ln b.
\end{align*}
Since $|t/(k - 1)| < 1$ for $n$ large enough, we have by Taylor expansion that
\begin{align*}
            \ln \left( 1 - \frac{t}{k - 1} \right) & = - \frac{t}{k - 1} -\frac{t^2}{2(k - 1)^2} - \frac{t^3}{3(k - 1)^3} - \cdots, \text{ and}\nonumber\\
(k - t - 1) \ln \left( 1 - \frac{t}{k - 1} \right) & = -t + \frac{t^2}{2(k - 1)} + \frac{t^3}{6(k - 1)^2} + \cdots,
\end{align*}
giving that
\begin{align}
&\frac{2 \ln \Exp(|\mathcal{S}_{n,t,k}|)}{k} = \nonumber\\
& \ \ \ 2 + 2 \ln n - 2 \ln k - t\ln t + t \ln (p(k - 1)) + t - (k - t - 1)\ln b + O\left(\frac{t^2 + \ln k}{k}\right). \label{eqn:logAn/k2,d}
\end{align}
 Now, since $t \ge 0$, $n p\to \infty$ and $t \le \ln (n p)$ for $n$ large enough, it follows that $k \ge 2 \log_b (n p) - 2 \log_b \ln (n p)$ and
 \begin{align*}
 \ln n - \ln k & \le \ln n - \ln\left( 2 \log_b (n p) - 2 \log_b \ln (n p)  \right) \nonumber\\
    & \le \ln n - \ln \ln (n p) - \ln (2/\ln b) - \ln\left( 1 - \frac{\ln \ln (n p)}{\ln (n p)} \right)\nonumber\\
    & \le \ln n - \ln \ln (n p) - \ln (2/\ln b) + O\left(\frac{\ln \ln (n p)}{\ln (n p)}\right) 
 \end{align*}
 for $n$ large enough.
 Furthermore, for $n$ large enough,
 \begin{align*}
 t \ln (p(k-1)) & \le t \ln (p(2\log_b (n p) + t \log_b \ln (n p) )) \nonumber\\
 & \le t \ln \ln (n p) + t \ln (2p/\ln b) + t \ln \left( 1 + \frac{t \ln \ln (n p)}{2\ln (n p)} \right) \nonumber\\
 & \le t \ln \ln (n p) + t \ln (2p/\ln b) + \frac{t^2 \ln \ln (n p)}{\ln (n p)}. 
 \end{align*}
 Similarly, for $n$ large enough,
 \begin{align*}
 \ln n - \ln k 
 & \ge \ln n - \ln \ln (n p) - \ln (2/\ln b) + O\left(\frac{t \ln \ln (n p)}{\ln (n p)}\right)\text{ and }\\
 t \ln (p(k-1)) 
 & \ge t \ln \ln (n p) + t \ln (2p/\ln b) + O\left(\frac{t \ln \ln (n p)}{\ln (n p)}\right)
 \end{align*}
 so that
 \begin{align}
 \ln n - \ln k 
 & = \ln n - \ln \ln (n p) - \ln (2/\ln b) + O\left(\frac{t \ln \ln (n p)}{\ln (n p)}\right)\text{ and } \label{eqn:part1,d}\\
 t \ln (p(k-1)) 
 & = t \ln \ln (n p) + t \ln (2p/\ln b) + O\left(\frac{t^2 \ln \ln (n p)}{\ln (n p)}\right). \label{eqn:part2,d}
 \end{align}
Until here, our calculations did not depend on using $k^+$ or $k^-$, but now we have
 \begin{align*}
 & (k^- - t - 1)\ln b \le \\
 & 2 \ln n + (t-2)\ln \ln (n p) - (t-2)\ln \ln b - t\ln t + t\ln (2 p e) + 2 \ln (e/2) \pm \delta\ln b \\
 &\le (k^+ - t - 1)\ln b.
 \end{align*}
 Substituting the last inequalities together with~\eqref{eqn:part1,d} and~\eqref{eqn:part2,d} into~\eqref{eqn:logAn/k2,d}, we obtain, for $n$ large enough,
 \begin{align*}
\frac{2 \ln \Exp(|\mathcal{S}_{n,t,k^-}|)}{k^-}
 & \ge O\left(\frac{t^2 \ln \ln (n p)}{\ln (n p)}\right) + O\left(\frac{t^2 + \ln k}{k}\right) + \delta\ln b = 
       (1 + o(1))\delta\ln b\text{ and} \\
 \frac{2 \ln \Exp(|\mathcal{S}_{n,t,k^+}|)}{k^+}
 & \le O\left(\frac{t^2 \ln \ln (n p)}{\ln (n p)}\right) + O\left(\frac{t^2 + \ln k}{k}\right) - \delta\ln b = 
       -(1 + o(1))\delta\ln b,
 \end{align*}
 since $t^2 \ln \ln (n p)/\ln (n p) = o(\delta\ln b)$ and $k \ge \ln (n p)$. Now, substituting the expression $(1 + o(1)) 2 \log_b (n p)$ for $k^+$ or $k^-$ completes the proof.
\end{proof}

For illustration, let us consider the case of $p$ and $t$ fixed.
To satisfy the conditions in the above lemma we need $\delta \ln n / \ln\ln n \to \infty$ as $n \to \infty$.
So we may, for instance, set $\delta = (\ln\ln n)^2/\ln n$.
We find that the expected number of $t$-sparse sets of size $k^-$ tends to infinity.
The probability that there is a $t$-sparse set of size at least $k^+$ is at most $\Exp(|\mathcal{S}_{n,t,k^+}|) \to 0 \mbox{ as } n \to \infty$, and so $\weakalphat{t}(\G{n}{p}) \leq \lfloor \weakalphatp{t}{p}(n) + \delta \rfloor$ a.a.s.


\section{Second moment calculations for the lower bound}\label{sec:secondmoment}

\begin{lemma}\label{lem:dense,weak,lower}
Let $0 < p = p(n) < 1$ be such that $p$ is bounded away from 1 and $p > n^{-1/3 + \eps}$, 
for some positive $\eps < 1/3$.   
Suppose $t = t(n) \ge 0$ and $\delta = \delta(n) > 0$ satisfy $t = o(\ln n / \ln \ln n)$ and $t^2 \ln \ln n/\ln n = o(p\delta)$.
Let $\weakalphatp{t}{p}(n)$ be as defined in~\eqref{eqn:weakalphatp}.
If $k = k(n) = \lfloor \weakalphatp{t}{p}(n) - \delta \rfloor$, then
\begin{align*}
\Pr(\weakalphat{t}(\G{n}{p}) < k) = o(1). 
\end{align*}
\end{lemma}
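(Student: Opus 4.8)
\section*{Proof proposal}

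The plan is to produce a large $t$-sparse set by the second moment method applied to $X := |\mathcal{S}_{n,t,k}|$, the number of $t$-sparse $k$-sets, with $k = \lfloor \weakalphatp{t}{p}(n) - \delta\rfloor$. The hypotheses here imply those of Lemma~\ref{lem:ExpectedAll} (since $np\to\infty$ and $\ln b = (1+o(1))p$), so $\Exp(X) \ge \exp((1+o(1))\delta\ln(np)) \to \infty$. As $\Pr(\weakalphat{t}(\G np) < k) = \Pr(X = 0) \le \Var(X)/\Exp(X)^2 = \Exp(X^2)/\Exp(X)^2 - 1$, it suffices to prove that $\Exp(X^2)/\Exp(X)^2 \to 1$. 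Writing $N = \binom nk$ and $P = \Pr(\avgdeg(\G kp) \le t)$ for the single-set probability, so that $\Exp(X) = NP$, I would decompose the second moment according to the overlap $j = |S \cap S'|$ of an ordered pair of $k$-sets:
$$\Exp(X^2) = \sum_{j=0}^{k} N_j P_j, \qquad N_j = \binom nk\binom kj\binom{n-k}{k-j},$$
where $P_j$ is the probability that two fixed $k$-sets meeting in exactly $j$ vertices are both $t$-sparse. The goal becomes to show $\sum_{j=0}^k \frac{N_j}{N^2}\cdot\frac{P_j}{P^2} \to 1$.

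The key structural point is that the edges of $\G np$ within $S \cup S'$ split into three independent families: the $\binom j2$ potential edges inside $W := S\cap S'$, the $M := \binom k2 - \binom j2$ potential edges of $S$ incident to $S\setminus S'$, and the symmetric $M$ edges of $S'$ incident to $S'\setminus S$. Conditioning on the number $E_W \sim \Bin(\binom j2, p)$ of edges inside $W$, the two sparsity events become independent, so that with $g(m) := \Pr(\Bin(M,p) \le tk/2 - m)$ one has $P = \Exp[g(E_W)]$ and $P_j = \Exp[g(E_W)^2]$; in particular $P_j/P^2 = \Exp[g(E_W)^2]/\Exp[g(E_W)]^2 \ge 1$, which quantifies the positive correlation. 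I would bound this ratio by using Lemma~\ref{lem.bindev} to write $\ln g(m) \approx -M\,\Lambda^*\!\left((tk/2 - m)/M\right)$, so that $g$ decays in $m$ at logarithmic rate $|\Lambda^{*\prime}| \approx \ln(pk/t)$ by Lemma~\ref{lem:Lambda*}; expanding $g$ around the mean of $E_W$ (whose fluctuation has standard deviation $\approx\sqrt{p\binom j2}$) then gives an estimate of the shape $P_j/P^2 \le \exp\!\left(O\!\left((\ln(pk/t))^2\, p\, j^2\right)\right)$.

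I expect to split the range of $j$ into three regimes. The term $j=0$ is the main one: the events are independent, $P_0 = P^2$, and $N_0/N^2 = \binom{n-k}{k}/\binom nk = 1 - O(k^2/n) = 1 - o(1)$, since $k = (1+o(1))\,2\log_b n$ and $p > n^{-1/3+\eps}$ forces $k^2 = o(n)$. For small $j$ the combinatorial factor $\frac{N_j}{N^2} \approx \frac{1}{j!}(k^2/n)^j$ is tiny and overwhelms the correlation growth; for $j$ close to $k$ the factor $\binom{n-k}{k-j}$ collapses and, as in the classical independence-number computation, again beats the correlation factor. The delicate regime is the \emph{intermediate overlap}, where both $N_j/N^2$ and $P_j/P^2$ are substantial. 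Here I would balance the combinatorial decay $(k^2/n)^j = \exp(-(1+o(1))j\ln n)$ against the correlation bound above, using Lemmas~\ref{lem:Lambda*} and~\ref{lem:LambdaApprox} to control the accumulated $\Lambda^*$-corrections, and show that the summand $\frac{N_j}{N^2}\cdot\frac{P_j}{P^2}$ decays geometrically and sums to $o(1)$.

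The main obstacle, as flagged in the introduction, is exactly this intermediate regime: the careful accounting of how many edges lie inside $W$, inside $S\setminus S'$, or across the overlap, and the conversion of these counts into differences of the rate function $\Lambda^*$. This is where the hypotheses enter in an essential way. The condition $p > n^{-1/3+\eps}$ is used to guarantee that the number of overlap edges is concentrated finely enough that the expansion of $g$ around its mean is valid and the correlation bound $\exp(O((\ln(pk/t))^2 p j^2))$ does not blow up before the combinatorial factor suppresses it; and the constraints $t = o(\ln n/\ln\ln n)$ together with $t^2\ln\ln n/\ln n = o(p\delta)$ ensure that the error terms accrued across all overlap sizes remain below the surplus $\delta\ln(np)$ present in $\Exp(X)$, so that the intermediate contribution is $o(1)$ and the whole sum tends to $1$.
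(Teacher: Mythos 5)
Your high-level architecture does match the paper's: a second-moment computation over overlap sizes (the paper uses Janson's inequality rather than Chebyshev, but for the stated $o(1)$ conclusion both reduce to the same estimate, $\sum_{2\le j\le k-1}N_jP_j=o\bigl(\Exp^2(|\mathcal{S}_{n,t,k}|)\bigr)$), three overlap regimes, Lemmas~\ref{lem.bindev}, \ref{lem:Lambda*}, \ref{lem:LambdaApprox}, and the surplus $\delta\ln(np)$ from Lemma~\ref{lem:ExpectedAll}. Your conditional-independence identity $P=\Exp[g(E_W)]$, $P_j=\Exp[g(E_W)^2]$ is correct and is a clean way to see the positive correlation. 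But your central quantitative claim, $P_j/P^2\le\exp\bigl(O((\ln(pk/t))^2\,p\,j^2)\bigr)$, is a Gaussian heuristic for the \emph{lower} tail of the binomial $E_W$ and is wrong in the relevant range. At $t=0$ (allowed by the lemma) one computes exactly $P_j/P^2=b^{\binom{j}{2}}$ while $\ln(pk/t)=\infty$, so the bound is vacuous there. More generally, under the hypotheses $c:=\ln(pk/t)\to\infty$ (since $pk=(2+o(1))\ln(np)$ and $t=o(\ln n/\ln\ln n)$), whereas the exact tilting computation $\Exp[e^{-cE_W}]=(q+pe^{-c})^{\binom{j}{2}}$ shows the ratio \emph{saturates} near $b^{\binom{j}{2}}=\exp\bigl((1+O(p))pj^2/2\bigr)$: a binomial cannot fluctuate below $0$. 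Your bound exceeds the truth by the unbounded factor $c^2$ in the exponent, and this is fatal, not cosmetic: at the top of your small-overlap regime, $j\asymp\eps k$, one has $c^2pj^2\gg j\ln n$, so the combinatorial factor $(k^2/n)^j$ no longer suppresses your correlation bound and the sketched summation fails already there (and likewise throughout the intermediate regime). The paper gets the correct $b^{\binom{\ell}{2}}$ factor by the simpler monotonicity step $\Pr(A\in\mathcal{S}_{n,t,k}\given B\in\mathcal{S}_{n,t,k})\le\Pr(A\in\mathcal{S}_{n,t,k})/\Pr(E[A\cap B]=\emptyset)$.

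You also misplace the difficulty. You call the intermediate overlap the delicate regime, attribute $p>n^{-1/3+\eps}$ to it, and wave at $j$ near $k$ ``as in the classical independence-number computation.'' In the paper it is the opposite: $\Delta_1$ and $\Delta_2$ need only $p\ge n^{-1/2+\eps}$ and crude bounds ($f(\ell)/\Exp^2=\exp(-\Omega(\ell\ln n))$), while the large-overlap regime $\Delta_3$ is the hard one, precisely because for $t$-sparsity (unlike $t=0$) the edge budget $tk/2$ is shared between the overlap and the $(k-\ell)(k+\ell-1)/2$ difference pairs. The paper must split on $m=e(A\cap B)$ at a threshold $\mu$ built from the auxiliary $\psi$ with $\Lambda^*(\psi p)=(1-\xi)\ln b$, yielding~\eqref{eqn:P1} and~\eqref{eqn:P2}; and the $\Lambda^*$ expansion error must be made proportional to $k-\ell$ (Lemma~\ref{lem:Lambda*}, used in~\eqref{eqn:ellapprox1}--\eqref{eqn:ellapprox2}), since a blanket $(1+o(1))$ multiplying $\binom{\ell}{2}\ln b$ would cost $o(k\ln(np))$ and swamp the surplus $\delta\ln(np)$ (note $\delta$ may be $o(1)$). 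The decisive balance is $(k-\ell)\ln(kn)$ against $(k-\ell)(k+\ell)(1-2\xi)\ln b/2\approx(k-\ell)\cdot 2\ln(np)$, i.e.\ $\ln n-\ln p+\ln\ln n$ against $2\ln n+2\ln p$, which is exactly where $p\ge n^{-1/3+\eps}$ enters. Your sketch supplies no mechanism for this regime, so as written the proposal has a genuine gap in both the correlation estimate and the large-overlap analysis.
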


\begin{proof}
Let $\mathcal{S}_{n,t,k}$ be the collection of $t$-sparse $k$-sets in $\G{n}{p}$.  By Lemma~\ref{lem:ExpectedAll},
\begin{align}
\Exp(|\mathcal{S}_{n,t,k}|) \ge \exp\left((1+o(1))\delta \ln (n p)\right). \label{eqn:exp}
\end{align}
We use Janson's Inequality (Theorem~2.18(ii) in~\cite{JLR00}):
\begin{align}
\Pr(\weakalphat{t}(\G{n}{p}) < k) = \Pr(|\mathcal{S}_{n,t,k}| = 0) \le \exp \left( - \frac{ \Exp^2(|\mathcal{S}_{n,t,k}|)}{\Exp(|\mathcal{S}_{n,t,k}|) + \Delta }\right), \label{eqn:Janson}
\end{align}
where
\[
\Delta = \sum_{A, B \subseteq [n], 1 < |A \cap B| < k} \Pr(A, B \in \mathcal{S}_{n,t,k}).
\]
We will split this sum into three sums according to the size of $|A \cap B|$ which we denote by $\ell$. 
In particular, let $p(k,\ell)$ be the probability that two $k$-subsets of $[n]$ that
overlap on exactly $\ell$ vertices are both in $\mathcal{S}_{n,t,k}$. 
Thus, 
\[ 
\Delta = \sum_{\ell = 1}^{k-1} \binom{n}{k} \binom{k}{\ell} \binom{n - k}{k-\ell} p(k,\ell). 
\]
For $\ell\in\{1,\ldots, k-1\}$, let $f(\ell) = \binom{n}{k} \binom{k}{\ell} \binom{n - k}{k-\ell} p(k,\ell)$.
We set $ \lambda_1 = \eps k/2$ and $\lambda_2 = (1-\eps)k$. (In fact, we shall assume throughout our proof 
that $\eps < 1/4$; note that this assumption still implies the lemma.)
Now we write $\Delta = \Delta_1 + \Delta_2 + \Delta_3$ where 
the parameters $\lambda_1$ and $\lambda_2$ determine the ranges of the three sums into which we decompose $\Delta$:
\begin{align*} 
\Delta_1 &=
 \sum_{1\le \ell < \lambda_1} f(\ell ), \ \
 \Delta_2 = \sum_{\lambda_1 \leq \ell < \lambda_2 } f(\ell ), \ \text{ and } \ \Delta_3 =\sum_{\lambda_2 \le \ell < k} f(\ell).
\end{align*}
We will show that for $i\in\{1,2,3\}$ we have  
\[\Delta_i = o \left( \Exp^2(\mathcal{S}_{n,t,k}) \right).  \]
So then the result follows from~\eqref{eqn:Janson}. 

To bound $\Delta_i$ for each $i\in\{1,2,3\}$, we consider two arbitrary $k$-subsets $A$ and $B$ of $[n]$ that overlap on exactly $\ell$ vertices,
i.e.~$|A\cap B| = \ell$, and estimate $p(k,\ell)$ by conditioning on the set $E[A\cap B]$ of edges induced by $A\cap B$.  In each of the
three regimes, we need slightly different techniques to estimate $p(k,\ell)$.


\subsubsection*{Bounding $\Delta_1$}

To bound $\Delta_1$,  
we write
\begin{align*}
p(k,\ell) = \Pr(A,B \in \mathcal{S}_{n,t,k}) = \Pr( A \in \mathcal{S}_{n,t,k} \given B \in \mathcal{S}_{n,t,k}) \cdot \Pr(B \in \mathcal{S}_{n,t,k}).
\end{align*}
The property of having average degree at most $t$ is monotone decreasing, so 
the conditional probability that $A \in \mathcal{S}_{n,t,k}$ is maximised when $E[A\cap B] = \emptyset$. 
Thus
\begin{align*}
\Pr( A \in \mathcal{S}_{n,t,k} \given B \in \mathcal{S}_{n,t,k})
& \le \Pr( A \in \mathcal{S}_{n,t,k} \given E[A\cap B] = \emptyset) \\
& \le \frac{\Pr(A \in \mathcal{S}_{n,t,k})}{\Pr(E[A\cap B] = \emptyset)} = 
b^{\binom{\ell}{2}} \Pr(A \in \mathcal{S}_{n,t,k})
\end{align*}
implying that $p(k,\ell) \le b^{\binom{\ell}{2}}\Pr^2(A \in \mathcal{S}_{n,t,k})$.  

We have though that for $n$ large enough
\begin{align*}
\frac{\binom{k}{\ell}~\binom{n-k}{k-\ell}}{\binom{n}{k}} \le 2\frac{\binom{k}{\ell} \cdot n^{k-\ell} / (k-\ell )!}{{n^k/k!}} = 
2 \left[\binom{k}{\ell} \right]^2~ \frac{\ell!}{n^\ell}.
\end{align*}
Thus
\begin{align*}
\Delta_1 & \leq 
\left(\binom{n}{k}\Pr(A \in \mathcal{S}_{n,t,k})\right)^2 \
\left( 2 \sum_{2 \leq \ell < \lambda_1} \left[\binom{k}{\ell} \right]^2~ \frac{\ell!}{n^\ell}~ b^{\binom{\ell}{2}} \right).
\end{align*}
We set 
\begin{align*}
s_{\ell}:= \left[\binom{k}{\ell} \right]^2~ \frac{\ell!}{n^\ell}~ b^{\binom{\ell}{2}}.
\end{align*}
Thus we write 
\begin{align*}
\Delta_1 \leq 2 \cdot \Exp^2(\mathcal{S}_{n,t,k}) 
\sum_{2\leq \ell < \lambda_1}  s_{\ell}.
\end{align*}
We will show that this sum is $o(1)$.

The following claim regards the monotonicity of $\{s_{\ell}\}$ for $\ell$ in the range of interest.  
\begin{claim} If $n$ is large enough, then for any $2\leq \ell < \lambda_1$ we have  $s_{\ell+1}/s_\ell < 1/2$. 
\end{claim}
\begin{proof}
We have 
\begin{align*}
\frac{s_{\ell+1}}{s_\ell} = \frac{(k-\ell)^2}{\ell +1}~\frac{b^{\ell}}{n} \leq \frac{k^2}{n} b^{\lambda_1} = 
O \left( \frac{n^{\eps}\log^2 n}{np^2}\right),
\end{align*}
as $b^{\lambda_1} = O(n^{\eps})$. 
But as $p \geq n^{-1/2 + \eps}$, we have $np^{2} \geq n^{2\eps}$ and, therefore, $s_{\ell+ 1}/s_\ell < 1/2$, for large enough 
$n$.  
\end{proof}
Thus the sum $\sum_{\ell < \lambda_1} s_{\ell}$ is essentially determined by its first term $s_2$:
\begin{align*}
\sum_{\ell < \lambda_1} s_{\ell} & \leq 2 s_2.
\end{align*} 
But we have 
\[
s_2 = O \left( \frac{k^4}{n^2} \right) = O \left( \frac{\log^4 n}{n^2p^4} \right) = O \left( n^2 \frac{\log^4 n}{(np)^4} \right) =o(1),
\]
if $p\geq n^{-1/2+\eps}$.


\subsubsection*{Bounding $\Delta_2$}

The bound on $\Delta_2= \sum_{\lambda_1 \leq \ell < \lambda_2 } f(\ell )$ involves a
more thorough consideration of the number of edges in the overlap between the sets $A$ and $B$. 

Let us fix some integer $\ell$ such that $\lambda_1 \leq \ell < \lambda_2$. We will show that 
$f(\ell )/\Exp^2(|\mathcal{S}_{n,t,k}|) = o(1/k)$.
With $A, B$ being two sets of vertices, each having size $k$, that overlap on $\ell$ vertices, we have 
\begin{equation} \label{eq:MasterRatio} 
\frac{f( \ell )}{\Exp^2(|\mathcal{S}_{n,t,k}|)} = \frac{ \binom{n-k}{k-\ell}~\binom{k}{\ell}}{\binom{n}{k}}
~\frac{\Pr( A, B \in \mathcal{S}_{n,t,k})}{\Pr^2( A \in \mathcal{S}_{n,t,k})}. 
\end{equation}
 The first ratio on the right-hand side can be bounded for $n$ sufficiently large as follows: 
\begin{align}\label{eqn:binom}
\frac{\binom{k}{\ell} \binom{n-k}{k-\ell}}{\binom{n}{k}} \le 2^{k+1}~\frac{n^{k-\ell} / (k-\ell)!}{n^k / k!} \leq 
2^{k+1}\binom{k}{\ell}~\frac{\ell!}{n^{\ell}} \leq 2^{2k+1} \left( \frac{k}{n} \right)^{\ell}.
\end{align}

We now give estimates on $\Pr( A, B \in \mathcal{S}_{n,t,k})$ as well as on $\Pr( A \in \mathcal{S}_{n,t,k})$. 
For each set $A$ of vertices, let $E[A]$ denote the set of edges with both their endvertices in $A$, and let $e(A)= |E[A]|$.  
Also, let $e'(A, B) = e(A) - e(A \cap  B)$, the number of edges in $E[A] \setminus E[A \cap B]$.
Setting $I=A\cap B$, we have 
\begin{equation*}
\Pr( A, B \in \mathcal{S}_{n,t,k} ) \leq \Pr( e(I) \leq kt/2 ) \cdot \Pr^2( e'(A, B) \leq kt/2 ). 
\end{equation*}
We will bound the two probabilities on the right-hand side of the above inequality using Lemma~\ref{lem.bindev}.  
 As $e(I) \in \Bin \left(\binom{\ell}{2},p\right)$ and $e'( A , B) \in \Bin \left( \binom{k}{2} - \binom{\ell}{2} ,p\right)$, 
with $x_{I}= kt/(\ell (\ell - 1))$ and $x_{A, B} = kt/(k(k-1) - \ell(\ell-1))$ we have  
\begin{align*}
&\Pr( e(I) \leq kt/2 ) = \exp \left(- \binom{\ell}{2}\Lambda^* \left( x_I\right) + O \left( \ln k \right) \right) \\
&\Pr( e'(A, B) \leq kt/2 ) = \exp \left(- \left(\binom{k}{2} - \binom{\ell}{2}\right) \Lambda^* 
\left( x_{A, B}\right) + O \left( \ln k \right) \right).
\end{align*}
Now, note that both $x_{I}$ and $x_{A, B}$ are $o(p)$. This holds since $x_I, x_{A, B}=O(t/k)$ and 
$k= \Theta ( \ln n/p)$ and $t = o( \ln n/ \ln \ln n)$. But now we can apply Lemma~\ref{lem:LambdaApprox} to obtain
\begin{equation} \label{eq:LargeDevsNum}
\begin{split}
& \Pr( e(I) \leq kt/2 ) \cdot \Pr^2( e'(A, B) \leq kt/2 ) \\
& = \exp \left( -  \binom{\ell}{2}\ln b (1+o(1)) - 2\left(\binom{k}{2} - \binom{\ell}{2}\right) \ln b (1+o(1)) + O   \left( \ln k \right)  \right) \\
&= \exp \left(   \binom{\ell}{2}\ln b - 2\binom{k}{2}\ln b  + o(k^2 p) \right).
\end{split}
\end{equation}
Similarly, 
\begin{equation} \label{eq:LargeDevsDenom} 
\Pr( A \in \mathcal{S}_{n,t,k} ) = \exp \left( - \binom{k}{2}\ln b + o(k^2 p) \right). 
\end{equation}
Hence the estimates in (\ref{eq:LargeDevsNum}) and (\ref{eq:LargeDevsDenom}) yield
\begin{align*} 
\frac{\Pr( A, B \in \mathcal{S}_{n,t,k} )}{\Pr^2( A \in \mathcal{S}_{n,t,k} )} = \exp \left( \binom{\ell}{2}\ln b 
+ o(k^2 p)\right). 
\end{align*} 
Now, combining the above together with (\ref{eqn:binom}) and the right-hand side of (\ref{eq:MasterRatio}), we obtain
\begin{equation} \label{eq:MasterRatioBound} 
\begin{split}
\frac{f(\ell)}{\Exp^2(|\mathcal{S}_{n,t,k}|)} & = \exp \left( - \ell \ln n + \ell \ln k + \binom{\ell}{2} \ln b + o( k^2 p)\right) \\  
&= \exp \left( - \ell \left( \ln n -  \ln k - \frac{\ell \ln b}{2} + o( k p) \right) \right). 
\end{split}
\end{equation}
We will show that $\ln n -  \ln k - \ell \ln b/2 \rightarrow \infty$ as $n \rightarrow \infty$, for any $\lambda_1 \leq \ell < \lambda_2$. 
Recall that $k = (2+o(1))\log_b (np)$. Thus $\ln n - \ln k = 
\ln \left( n \ln b/(2 \ln (np))\right) +o(1) \geq \ln \left( n p\right) +O(\ln \ln n)$ as $\ln b \geq p$. 
Also, as $\ell < (1-\eps ) k$, we have 
$\ell \ln b /2 < (1- \eps ) \ln (np) (1+o(1))$. Therefore 
\[
\ln n -  \ln k - \frac{\ell \ln b}{2} > \eps \ln (np) + o(\ln n).
\]

These two bounds substituted into (\ref{eq:MasterRatioBound}) now imply that 
\begin{equation} \label{eq:MasterRatioBoundFinal} 
\frac{f(\ell)}{\Exp^2( |\mathcal{S}_{n,t,k}|)} = \exp \left( - \Omega ( \ell \ln n)\right), 
\end{equation}
 uniformly for all $\lambda_1 \leq \ell < \lambda_2$. But since $\ell \geq \eps k/2$, this bound is $o(1/k)$ and therefore 
$\Delta_2 = o\left( \Exp^2(\mathcal{S}_{n,t,k}) \right)$.

\subsubsection*{Bounding $\Delta_3$}

Next, to bound $\Delta_3$, the aim here is also to show that for $\ell \geq \lambda_2$ we have 
\[
\frac{f(\ell)}{\Exp^2(|\mathcal{S}_{n,t,k}|) } = o\left( \frac{1}{k}\right).
\]
This is the portion of $\Delta$ that is the most difficult to control.  It is also the regime in which the condition $p \ge n^{-1/3+\eps}$ is required.  (We only required the weaker condition $p \ge n^{-1/2+\eps}$ to bound $\Delta_1$ and $\Delta_2$.)  In this regime, we need to separately treat two sub-regimes which are divided according to the edge count in the overlap.

Let us consider an arbitrary $\ell \geq \lambda_2$ and write
\begin{align*}
p(k,\ell) = \sum_{m = 0}^{\lfloor t k/2 \rfloor} p(k,\ell,m)
\end{align*}
where $p(k,\ell,m) = \Pr( A, B \in \mathcal{S}_{n,t,k} \land e( A\cap B ) = m )$.
(Note that $m \le t k/2$ or trivially both $A, B \notin \mathcal{S}_{n,t,k}$.)  We split this summation in two:
\begin{align}
p(k,\ell) = \sum_{m = 0}^{\mu} p(k,\ell,m) + \sum_{m = \mu + 1}^{\lfloor t k/2 \rfloor} p(k,\ell,m) =: p_1(k,\ell) + p_2(k,\ell),\label{eqn:edgecond}
\end{align}
with $\mu = \max\{0, \lfloor t k/2 - (k - \ell)(k + \ell - 1)\psi p/2 \rfloor \}$, where $\psi$ is the unique $0 < \psi = \psi(n) < 1$ such that $\Lambda^*(\psi p) = (1-\xi)\ln b$, for some fixed $0 < \xi < 1$ yet to be specified.

That $\psi$ exists is guaranteed by the fact that $\Lambda^*$ is strictly decreasing on $[0, p)$, $\Lambda^*(0) = \ln b$ and $\Lambda^*(p) = 0$.
  We now show that $\psi$ is bounded away from $0$.
  We have that $\psi$ satisfies
\begin{align*}
\xi
& = 1 - \frac{\Lambda^*(\psi p)}{\ln b}
= 1 - \left(\frac{\psi p}{\ln b} \ln \psi + \frac{1-\psi p}{\ln b}\ln\frac{1-\psi p}{q}\right) \\
& = \psi p + \frac{\psi p}{\ln b} \ln \frac{1}{\psi} - \frac{(1-\psi p)\ln (1-\psi p)}{\ln b} \\
& \leq \psi p + \frac{\psi p}{\ln b} \ln \frac{1}{\psi} + \frac{\psi p}{\ln b}
\end{align*}
  since $(1-x) \ln(1-x) \geq -x$ for $0<x<1$.  Thus, using also $p \leq \ln b$,
\begin{align*}
  \xi  \leq  \psi p + \psi \ln \frac{1}{\psi} + \psi \leq \psi (2 + \ln \psi)
\end{align*}
  But $x(2+\ln x) \to 0$ as $x \searrow 0$.  Hence there exists $\delta=\delta(\xi)>0$ such that
  $\psi \geq \delta$ uniformly over $p$.

Let us give a bound on $p_1(k,\ell)$.  We may assume that $(k - \ell)(k + \ell - 1)\psi p \le t k$, or else the sum is empty.
It will suffice to consider $E[A\cap B]$ alone.
Observe that $e ( A\cap B )$ is binomially distributed with parameters $\binom{\ell}2$ and $p$.  But $\ell \geq \lambda_2 = 
\Omega(\ln n/p)$, and so since $t = o(\ln n)$ it follows that $\mu \le t k/2 = o\left( p\binom{\ell}2 \right)$.  Thus, by Lemma~\ref{lem.bindev},
\begin{align*}
p_1(k,\ell)
& \le \Pr( e ( A\cap B ) \le \mu)
 \le \exp\left( -\binom{\ell}{2} \Lambda^*\left(\mu\left/\binom{\ell}{2}\right.\right) \right) \\
& = \exp\left( -\binom{\ell}{2} \Lambda^*\left(\frac{t k}{\ell(\ell - 1)} - \frac{(k - \ell)(k + \ell - 1) \psi p}{\ell(\ell - 1)} \right) \right).
\end{align*}
By Lemma~\ref{lem:Lambda*}, since $t k = o(p \ell(\ell - 1))$ and $0 \le (k - \ell)(k + \ell - 1)\psi p \le t k$,
\begin{align}
& p_1(k,\ell) 
 \le \exp\left( -\binom{\ell}{2} \left(\Lambda^*\left(\frac{t k}{\ell(\ell - 1)} \right) + (1 + o(1))\frac{(k - \ell)(k + \ell - 1) \psi p}{\ell(\ell - 1)}\ln \frac{p \ell(\ell - 1)}{t k}\right) \right) \nonumber\\
& = \exp\left( -\binom{\ell}{2} \Lambda^*\left(\frac{t k}{\ell(\ell - 1)} \right) 
 -(1 + o(1))\left(\frac{(k - \ell)(k + \ell - 1)}{2} \psi p\ln \frac{p \ell(\ell - 1)}{t k}\right) \right). \label{eqn:P1}
\end{align}

To estimate $p_2(k,\ell)$, we need a finer argument in which we also consider the sets $E[A]$ and $E[B]$ of edges induced by $A$ and $B$,
respectively.  In particular, let $X_1$ and $X_2$ denote $e'(A,B)$ (recall that this is $e(A)-e(A\cap B)$) and $e'(B,A)$, respectively.  Note that
$X_1$ and $X_2$ are binomially distributed with parameters $\ell(k - \ell) + \binom{k-\ell}2 = (k - \ell)(k + \ell - 1)/2$ and $p$.  Furthermore,
$X_1$ and $X_2$ and $e (A \cap B )$ are independent.  Therefore,
\begin{align*}
p_2(k,\ell)
& \le \Pr( e(A \cap B) \le t k/2) \cdot \Pr^2(X_1 \le t k/2 - \mu - 1).
\end{align*}
By Lemma~\ref{lem.bindev}, (since $t k/2 = o\left( p\binom{\ell}2 \right)$,)
\begin{align*}
\Pr(e(A \cap B ) \le t k/2)
& \le \exp\left( -\binom{\ell}{2} \Lambda^*\left(\frac{t k}{\ell(\ell - 1)} \right) \right)
\end{align*}
and (as $0 < \psi < 1$)
\begin{align*}
\Pr(X_1 \le t k/2 - \mu - 1)
& \le \Pr\left(X_1 \le \frac{(k - \ell)(k + \ell - 1)}{2} \psi p\right) \\
& \le \exp\left( -\frac{(k - \ell)(k + \ell - 1)}{2} \Lambda^*(\psi p) \right)\\
& = \exp\left( -\frac{(k - \ell)(k + \ell - 1)}{2} (1-\xi) \ln b \right).
\end{align*}
We conclude that
\begin{align}
p_2(k,\ell)
& \le \exp\left( -\binom{\ell}{2} \Lambda^*\left(\frac{t k}{\ell(\ell - 1)} \right) 
 -\frac{(k - \ell)(k + \ell - 1)}{2} (2-2\xi) \ln b \right).
\label{eqn:P2}
\end{align}
Comparing with~\eqref{eqn:P1}, since $t k = o(p \ell(\ell - 1))$ and $\psi = \Theta(1)$, we notice that $p_1(k,\ell)$ is asymptotically smaller than the above upper bound on $p_2(k,\ell)$.

Now, from $\ell \geq \lambda_2$ it follows that 
\begin{align*}
\ell(\ell - 1)
 \ge (k - \ell)(k + \ell - 1). 
\end{align*}
Indeed, $(k-\ell)(k+\ell - 1) \leq k^2 - \ell^2 \leq k^2- (1-\eps)^2 k^2 \leq 2\eps k^2$ and also for $n$ sufficiently large
$\ell (\ell -1) \geq (1-\eps)^2k^2 \geq (1-2\eps ) k^2$. As $\eps < 1/4$, the above inequality holds. 

Thus, since $t = o(p(k - 1))$, we obtain using Lemma~\ref{lem:Lambda*} that
\begin{align}
\binom{\ell}{2} \Lambda^*\left(\frac{t k}{\ell(\ell - 1)} \right)
& = \binom{\ell}{2} \Lambda^*\left( \left(1 + \frac{(k - \ell)(k + \ell - 1)}{\ell(\ell - 1)}\right)\frac{t}{k - 1}\right) \nonumber\\
& = \binom{\ell}{2} \Lambda^*\left(\frac{t}{k - 1}\right) - (1 + o(1)) \binom{\ell}{2} \frac{(k - \ell)(k + \ell - 1)t}{\ell(\ell - 1) k} \ln \frac{p k}{t} \nonumber\\
& = \binom{\ell}{2} \Lambda^*\left(\frac{t}{k - 1}\right) - (1 + o(1)) \frac{(k - \ell)(k + \ell - 1)}{2} p \frac{\ln (p k/t)}{p k/t} \nonumber\\
& = \binom{\ell}{2} \Lambda^*\left(\frac{t}{k - 1}\right) - o\left( \frac{(k - \ell)(k + \ell - 1)}{2} \ln b \right).
\label{eqn:ellapprox1}
\end{align}
Furthermore, since $\Lambda^*$ is strictly decreasing on $[0, p)$ and $\Lambda^*(0) = \ln b$,
\begin{align}
\binom{\ell}{2} \Lambda^*\left(\frac{t}{k - 1}\right)
& = \left( \binom{k}{2} - \frac{(k - \ell)(k + \ell - 1)}{2} \right) \Lambda^*\left(\frac{t}{k - 1}\right) \nonumber\\
& = \binom{k}{2} \Lambda^*\left(\frac{t}{k - 1}\right) - \frac{(k - \ell)(k + \ell - 1)}{2} \Lambda^*\left(\frac{t}{k - 1}\right) \nonumber\\
& \ge \binom{k}{2} \Lambda^*\left(\frac{t}{k - 1}\right) - \frac{(k - \ell)(k + \ell - 1)}{2} \ln b.
\label{eqn:ellapprox2}
\end{align}
Combining~\eqref{eqn:P2}--\eqref{eqn:ellapprox2}, we conclude that 
\begin{align} \label{eq:p2Bound}
p_2(k,\ell)
& \le \exp\left( -\binom{k}{2} \Lambda^*\left(\frac{t}{k - 1}\right)
-(1 + o(1)) \frac{(k - \ell)(k + \ell - 1)}{2} (1-2\xi) \ln b \right).
\end{align}
As remarked earlier, $p_1(k,\ell)$ is asymptotically smaller than the upper bound for $p_2(k,\ell)$.  
Hence it suffices to show that 
\[
\frac{\binom{n}{k}~\binom{k}{\ell}~\binom{n-k}{k-\ell}~p_2 (k,\ell )}{\Exp^2(|\mathcal{S}_{n,t,k}|)}  = o\left( \frac{1}{k}\right).
\]
Recall that with $A$ being a set of vertices of size $k$ we have
\[
\Exp(|\mathcal{S}_{n,t,k}|) = \binom{n}{k} \Pr(A\in \mathcal{S}_{n,t,k}) = \binom{n}{k}~\exp \left( - \binom{k}{2} \Lambda^*\left(\frac{t}{k - 1}\right) + O (\ln k) \right),
\]
where the last equality follows from Lemma~\ref{lem:A_n}. 
Thus, using (\ref{eq:p2Bound}), we have 
\begin{equation} 
\begin{split}
&\frac{\binom{n}{k}~\binom{k}{\ell}~\binom{n-k}{k-\ell}~p_2 (k,\ell )}{\Exp^2(|\mathcal{S}_{n,t,k}|)}   = \\
 & \frac{\binom{k}{\ell}~\binom{n-k}{k-\ell}}{\Exp(|\mathcal{S}_{n,t,k}|)}~ \exp\left( -(1 + o(1)) \frac{(k - \ell)(k + \ell - 1)}{2} (1-2\xi) \ln b + O(\ln k)\right).
\end{split}
\end{equation}
Now
\begin{align*}
{\binom{k}{\ell}~\binom{n-k}{k-\ell}} 
 \le (k n)^{k- \ell}.
\end{align*}
Thus using the lower bound on $\Exp(|\mathcal{S}_{n,t,k}|)$ given in (\ref{eqn:exp}) we obtain
\begin{equation} \label{eq:FinalRatio}
\begin{split}
&\ln \frac{\binom{n}{k}~\binom{k}{\ell}~\binom{n-k}{k-\ell}~p_2 (k,\ell)}{\Exp^2(|\mathcal{S}_{n,t,k}|)}   = \\
 & (k-\ell )\ln (kn) - (1+o(1))\delta \ln (np) -(1 + o(1)) \frac{(k - \ell)(k + \ell)}{2} (1-2\xi) \ln b + O(\ln k).
\end{split}
\end{equation}
Now, we have for $n$ sufficiently large
\begin{equation*}
\begin{split}
(k-\ell )&\ln (kn) -(1 + o(1)) \frac{(k - \ell)(k + \ell)}{2} (1-2\xi) \ln b \\
 & \leq (k-\ell) \left( \ln (nk) - \frac{(2-\eps)k}{2} (1-2\xi) \ln b \right) \\ 
&\leq (k-\ell) (\ln (nk) - (2 - 2\eps) \ln (n p)),  
\end{split}
\end{equation*}
where in the last inequality we used a choice of $\xi$ small enough as well as the fact that $k \ln b = (1+o(1))2\ln (np)$.
But also $k \leq (1+o(1)) 2\ln (np) /p$, as $\ln b \geq p$. This implies that 
$\ln k \leq \ln \ln (n p) - \ln p + O(1)$. Hence, for $n$ sufficiently large,
\begin{align*}
(k-\ell )&\ln (kn) -(1 + o(1)) \frac{(k - \ell)(k + \ell)}{2} (1-2\xi) \ln b \\
& \leq (k-\ell) (-\ln n + \ln \ln (n p) + O(1) - 3 \ln p + 2\eps \ln (n p) ) \\
& \le (k-\ell) (-\ln n - 3\ln n^{-1/3+\eps} + 3\eps \ln (n p)) \le 0,
\end{align*}
where we used the condition $p \ge n^{-1/3+\eps}$ in the second last inequality.
Substituting this into (\ref{eq:FinalRatio}), we obtain 
\begin{equation*} 
\begin{split}
\frac{\binom{n}{k}~\binom{k}{\ell}~\binom{n-k}{k-\ell}~p_2 (k,\ell )}{\Exp^2(|\mathcal{S}_{n,t,k}|)} \leq 
\exp\left( - (1+o(1))\delta \ln (np) + O(\ln k)\right).
\end{split}
\end{equation*}
But $\ln \ln n/\ln n = o(p\delta)$ and therefore $\ln \ln n/p = o(\delta \ln (np))$.
On the other hand, $\ln k = O\left(\ln ( \ln n/ p ) \right)$, which implies that $\ln k = o(\delta \ln (np))$. 
Therefore
\begin{equation*} 
\begin{split}
\frac{\binom{n}{k}~\binom{k}{\ell}~\binom{n-k}{k-\ell}~p_2 (k,\ell )}{\Exp^2(|\mathcal{S}_{n,t,k}|)} = o\left( \frac{1}{k} \right),
\end{split}
\end{equation*}
as required. 
\end{proof}

\bibliographystyle{abbrv}
\bibliography{avtdep}

\appendix

\section{Appendix}

\begin{proof}[Proof of Lemma~\ref{lem:Lambda*}]
We split the proof into two cases.
First, if $\eps = -1$, then
\begin{align*}
&\Lambda^*\left( \frac{t}{k-1} \right) - \Lambda^*\left( \frac{(1+\eps)t}{k-1} \right)
= \Lambda^*\left( \frac{t}{k-1} \right) - \Lambda^*(0) \\
& = \frac{t}{k-1}\ln\frac{t}{p(k-1)} + \left(1-\frac{t}{k-1}\right)\ln\frac{1-\frac{t}{k-1}}{q} - \ln\frac{1}{q}\\
& = \frac{t}{k-1}\ln\frac{t}{p(k-1)} + \left(1-\frac{t}{k-1}\right)\ln\left(1-\frac{t}{k-1}\right) - \frac{t}{k-1}\ln\frac{1}{q}\\
& = \frac{t}{k-1}\ln\frac{qt}{p(k-1)} - \frac{t}{k-1} + O\left(\frac{t^2}{k^2}\right)
 = -(1 + o(1))\frac{t}{k}\ln\frac{p k}{t}\\
& = (1 + o(1))\frac{\eps t}{k}\ln\frac{p k}{t}
\end{align*}
(where we used $t = o(k)$ and the Taylor expansion of $(1 - t/(k-1))\ln (1 - t/(k-1))$).
Otherwise, $-1 < \eps \le 1$ and
\begin{align*}
\Lambda^*\left( \frac{(1+\eps)t}{k-1} \right)
& = \left(\frac{(1+\eps)t}{k-1}\right) \ln \frac{(1+\eps)t}{p(k-1)} + \left(1-\frac{(1+\eps)t}{k-1}\right) \ln \frac{k-1-(1+\eps)t}{q(k-1)} \\
& = \frac{t}{k-1} \ln \frac{(1+\eps)t}{p(k-1)} + \frac{\eps t}{k-1} \ln \frac{(1+\eps)t}{p(k-1)} \\
& \ \ \ \ \ \ \ + \left(1 - \frac{t}{k-1}\right) \ln \frac{k-1-(1+\eps)t}{q(k-1)} - \frac{\eps t}{k-1} \ln \frac{k-1-(1+\eps)t}{q(k-1)} \\
& = \Lambda^*\left(\frac{t}{k-1}\right)
 + \frac{t}{k-1} \ln (1+\eps) + \left(1 - \frac{t}{k-1}\right) \ln \left( 1 - \frac{\eps t}{k-1-t} \right) \\
& \ \ \ \ \ \ \ + \frac{\eps t}{k-1} \ln \frac{q(1+\eps)t}{p(k-1-(1+\eps)t)}
\end{align*}
and the lemma follows by observing that, by Taylor expansion,
\begin{align*}
\frac{t}{k-1} \ln (1+\eps) + \left(1 - \frac{t}{k-1}\right) \ln \left( 1 - \frac{\eps t}{k-1-t} \right)
 = O\left(\frac{\eps^2 t}{k}\right)&\text{ and}\\
\frac{\eps t}{k-1} \ln \frac{q(1+\eps)t}{p(k-1-(1+\eps)t)}
 = -(1 + o(1))\frac{\eps t}{k} \ln \frac{p k}{t}&.
\end{align*}
\end{proof}

\begin{proof}[Proof of Lemma~\ref{lem:LambdaApprox}]
Since $(1-x) \ln (1-x)=O(x)$ as $x \to 0$,
\begin{equation*} 
\begin{split}
\Lambda^* (x) 
&= x \ln \left( \frac{x}{p} \right) + (1-x) \ln b + (1-x) \ln (1-x) \\
&= \ln b \left( 1 + \frac{x}{\ln b} \ln \left( \frac{x}{p} \right) + O \left( \frac{x}{\ln b} \right)  \right).
\end{split}
\end{equation*}
But $p = \Theta (\ln b)$ and $x = o(p)$,
and the lemma follows. 
\end{proof}

\end{document}